\documentclass[oneside,reqno,english]{amsart}
\usepackage{lmodern}
\usepackage[T1]{fontenc}
\usepackage[latin9]{inputenc}
\setcounter{tocdepth}{1}
\usepackage{babel}
\usepackage{varioref}
\usepackage{float}
\usepackage{amstext}
\usepackage{amsthm}
\usepackage{amssymb}
\usepackage{graphicx}
\usepackage[unicode=true,pdfusetitle,
 bookmarks=true,bookmarksnumbered=false,bookmarksopen=false,
 breaklinks=false,pdfborder={0 0 1},backref=false,colorlinks=false]
 {hyperref}

\makeatletter

\providecommand{\tabularnewline}{\\}
\floatstyle{ruled}
\newfloat{algorithm}{tbp}{loa}
\providecommand{\algorithmname}{Algorithm}
\floatname{algorithm}{\protect\algorithmname}

\numberwithin{equation}{section}
\numberwithin{figure}{section}
\newenvironment{lyxcode}
	{\par\begin{list}{}{
		\setlength{\rightmargin}{\leftmargin}
		\setlength{\listparindent}{0pt}
		\raggedright
		\setlength{\itemsep}{0pt}
		\setlength{\parsep}{0pt}
		\normalfont\ttfamily}%
	 \item[]}
	{\end{list}}
\theoremstyle{plain}
\newtheorem{thm}{\protect\theoremname}[section]
\theoremstyle{remark}
\newtheorem{rem}[thm]{\protect\remarkname}
\theoremstyle{plain}
\newtheorem{prop}[thm]{\protect\propositionname}
\theoremstyle{plain}
\newtheorem{lyxalgorithm}[thm]{\protect\algorithmname}
\theoremstyle{remark}
\newtheorem{claim}[thm]{\protect\claimname}

\newcommand{\Val}{\mbox{\rm Val}}
\newcommand{\outdeg}{\mbox{\rm outdeg}}

\makeatother

\providecommand{\algorithmname}{Algorithm}
\providecommand{\claimname}{Claim}
\providecommand{\propositionname}{Proposition}
\providecommand{\remarkname}{Remark}
\providecommand{\theoremname}{Theorem}

\begin{document}
\begin{lyxcode}
\title[Optimization with directed unreliable edges]{A dual ascent algorithm for asynchronous distributed optimization  with unreliable directed communications} 
\end{lyxcode}

\subjclass[2010]{68W15, 65K05, 90C25, 90C30}
\begin{abstract}
We show that the averaged consensus algorithm on directed graphs with
unreliable communications by Bof-Carli-Schenato has a dual optimization
interpretation, which could be extended to the case of distributed
optimization. We report on our numerical simulations for the distributed
optimization algorithm for smooth and nonsmooth functions. 
\end{abstract}

\author{C.H. Jeffrey Pang}
\thanks{C.H.J. Pang acknowledges grant R-146-000-265-114 from the Faculty
of Science, National University of Singapore. }
\curraddr{Department of Mathematics\\ 
National University of Singapore\\ 
Block S17 08-11\\ 
10 Lower Kent Ridge Road\\ 
Singapore 119076 }
\email{matpchj@nus.edu.sg}
\date{\today{}}
\keywords{Distributed optimization, directed graphs, unreliable communications,
Dykstra's algorithm}

\maketitle
\tableofcontents{}

\section{Introduction}

Let $G=(V,E)$ be a directed graph. Consider the distributed optimization
problem
\begin{equation}
\min_{x\in\mathbb{R}^{m}}\sum_{i\in V}\left[f_{i}(x)+\frac{1}{2}\|x-\bar{x}_{i}\|^{2}\right].\label{eq:dist_opt_pblm}
\end{equation}
Here, $f_{i}(\cdot)$ are closed convex functions. The challenge in
distributed optimization is that the communications in the algorithm
need to obey the directed edges in the underlying graph. Note that
if $f_{i}(\cdot)$ are the zero functions and $m=1$, then the minimizer
of \eqref{eq:dist_opt_pblm} is exactly $\frac{1}{|V|}\sum_{i\in V}\bar{x}_{i}$,
which is precisely the distributed averaged consensus problem.

A distributed asynchronous algorithm for averaged consensus on a directed
graph with unreliable communications was designed in \cite{Bof_Carli_Schenato_2017}.
The paper \cite{Bof_Carli_Schenato_2017} was inspired by two algorithms
for averaged consensus in the literature. In an asynchronous setting,
\cite{Benezit_Blondel_Thiran_Tsitsilkis_Vetterli_2010_weighted_avr}
introduced an algorithm that reaches averaged consensus using the
so-called ratio consensus. The paper in \cite{Vaidya_Hadji_Domin_2011}
gave the idea of mass transfer used in \cite{Bof_Carli_Schenato_2017}.
(Other papers also mentioned \cite{Hadji_Vaidya_Domin_2016_running_sums}.)
The paper \cite{Bof_Carli_Schenato_2017} also proved linear convergence
of their algorithm, and pointed out algorithms in \cite{VZCP_Schenato_2016,Cattivelli_Sayed_2010,Domin_Hadji_2010_smartgrid}
need the averaged consensus algorithm and its linear convergence as
a building block. The work in \cite{Bof_Carli_Schenato_2017} has
led to other strategies for distributed asynchronous optimization
on directed and unreliable communications \cite{Notarstefano_gang_Newton_2017,Scutari_ASY_SONATA_2018}. 

Another idea in this paper comes from our related work on solving
the distributed problem \eqref{eq:dist_opt_pblm} on undirected graphs.
In \cite{Pang_Dist_Dyk,Pang_sub_Dyk,Pang_rate_D_Dyk,Pang_level_sets_Dyk},
we proposed a distributed asynchronous optimization algorithm. The
idea behind those papers is that the problem \eqref{eq:dist_opt_pblm}
can be written as a variant of the product space formulation, and
subsequently solved with Dykstra's algorithm \cite{Dykstra83}. Dykstra's
algorithm is identical to block coordinate minimization on the dual
\cite{Han88}, and is notable because the convergence to its primal
minimizer does not rely on the existence of dual optimizers \cite{BD86,Gaffke_Mathar}.
We were also motivated by these works, as well as \cite{Hundal-Deutsch-97}
for the asynchronous operation of the algorithm. Some interesting
properties of the algorithm in \cite{Pang_Dist_Dyk,Pang_sub_Dyk,Pang_rate_D_Dyk,Pang_level_sets_Dyk}
include: being able to work on time-varying graphs, allow for partial
communication of data, allow for more than two of the $f_{i}(\cdot)$
to be indicator functions of closed convex sets (instead of being
smooth functions), has deterministic convergence with rates mostly
compatible with well known first order methods, and convergence to
the primal solution even when there are no dual optimizers.

The case of distributed optimization where $f_{i}(\cdot)$ need not
be smooth functions is also interesting in its own right; The recent
paper \cite{Borkar_distrib_dyk} worked on the case where $f_{i}(\cdot)$
are indicator functions of closed convex sets, and highlighted \cite{Aybat_Hamedani_2016,LeeNedich2013,Nedic_Ram_Veeravalli_2010,Ozdaglar_Nedich_Parrilo}.
Ideally, one would want to solve the problem where the quadratic regularizers
in \eqref{eq:dist_opt_pblm} were removed, but the quadratic regularizer
is needed for the algorithm in \cite{Pang_Dist_Dyk} and the follow
up papers to work.

\subsection{Contributions of this paper}

In this paper, we propose a distributed algorithm on directed graphs
with unreliable communications for the regularized optimization problem
\eqref{eq:dist_opt_pblm} which generalizes the algorithm in \cite{Bof_Carli_Schenato_2017}
(for the averaged consensus problem on direced graphs and unreliable
edges) and \cite{Pang_Dist_Dyk} (for a distributed optimization algorithm
for \eqref{eq:dist_opt_pblm} on undirected graphs). We show that
the dual objective value of \eqref{eq:dist_opt_pblm} gives a potential
function (or Lyapunov function) similar to that of \cite{Pang_Dist_Dyk}
whose value is monotonically nonincreasing throughout the algorithm. 

\section{Algorithm derivation and description}

In this section, we derive our algorithm. Those familiar with \cite{Bof_Carli_Schenato_2017}
would recognize operations $A$ and $B$ in Algorithm \ref{alg:undir-alg},
but operation $C$ there requires some preparation in the dual formulation. 

Let $\bar{m}=\frac{1}{|V|}\sum_{i\in V}\bar{x}_{i}$. We have 
\begin{equation}
\sum_{i\in V}\frac{1}{2}\|x-\bar{x}_{i}\|^{2}=\frac{|V|}{2}\|x-\bar{m}\|^{2}+\underbrace{\sum_{i\in V}\bar{x}_{i}^{T}\bar{x}_{i}-|V|\bar{m}^{T}\bar{m}}_{C}.\label{eq:make-C}
\end{equation}
So we can assume that all $\bar{x}_{i}$ in \eqref{eq:dist_opt_pblm}
are equal to $\bar{m}$. (This does not mean that a starting primal
variable needs to be $\bar{m}$.) Let $\{s_{\alpha}\}_{\alpha\in V\cup E\cup\{r\}}$
be such that 
\begin{equation}
\sum_{\alpha\in V\cup E\cup\{r\}}s_{\alpha}=|V|,\text{ and }s_{\alpha}\begin{cases}
>0 & \text{ for all }\alpha\in V\\
\geq0 & \text{ for all }\alpha\in E\cup\{r\}.
\end{cases}\label{eq:condn-on-s}
\end{equation}
Let $\mathbf{x}\in[\mathbb{R}^{m}]^{|V\cup E\cup\{r\}|}$, and for
all $i\in V$, let $\mathbf{f}_{i}:[\mathbb{R}^{m}]^{|V\cup E\cup\{r\}|}\to\mathbb{R}\cup\{\infty\}$
be defined as $\mathbf{f}_{i}(\mathbf{x})=f_{i}([\mathbf{x}]_{i})$.
Let the set $F$ be 
\[
F:=\big\{\{i,(i,j)\}:(i,j)\in E\big\}\cup\big\{\{j,(i,j)\}:(i,j)\in E\big\}\cup\big\{\{r,\alpha\}:\alpha\in V\cup E\big\}.
\]
and let the hyperplane $H_{\{\alpha_{1},\alpha_{2}\}}$, where $\{\alpha_{1},\alpha_{2}\}\in F$,
be defined by 
\[
H_{\{\alpha_{1},\alpha_{2}\}}:=\{\mathbf{x}\in[\mathbb{R}^{m}]^{|V\cup E\cup\{r\}|}:x_{\alpha_{1}}=x_{\alpha_{2}}\}.
\]
We assume the underlying graph is strongly connected, so 
\[
\cap_{\beta\in F}H_{\beta}=\big\{\mathbf{x}\in[\mathbb{R}^{m}]^{|V\cup E\cup\{r\}|}:x_{\alpha_{1}}=x_{\alpha_{2}}\text{ for all }\alpha_{1},\alpha_{2}\in V\cup E\cup\{r\}\big\}.
\]
The primal problem \eqref{eq:dist_opt_pblm} can then be equivalently
written in the product space formulation as 
\begin{equation}
\min_{\mathbf{x}\in[\mathbb{R}^{m}]^{|V\cup E\cup\{r\}|}}\sum_{\alpha\in V\cup E\cup\{r\}}\frac{s_{\alpha}}{2}\|[\mathbf{x}]_{\alpha}-\bar{m}\|^{2}+\sum_{i\in V}\mathbf{f}_{i}(\mathbf{x})+\sum_{\beta\in F}\delta_{H_{\beta}}(\mathbf{x})+C,\label{eq:2nd-primal}
\end{equation}
where $C$ is as marked in \eqref{eq:make-C}. If $\mathbf{x}^{*}\in[\mathbb{R}^{m}]^{|V\cup E\cup\{r\}|}$
is an optimal solution of \eqref{eq:2nd-primal}, then all $|V\cup E\cup\{r\}|$
components (in $\mathbb{R}^{m}$) of $\mathbf{x}^{*}$ are equal,
and are optimal solutions of \eqref{eq:dist_opt_pblm}. The (Fenchel)
dual of \eqref{eq:2nd-primal} can be calculated to be 
\begin{equation}
\sup_{{\mathbf{z}_{\alpha}\in[\mathbb{R}^{m}]^{|V\cup E\cup\{r\}|}\atop \alpha\in V\cup F}}\frac{|V|}{2}\|\bar{m}\|^{2}-\sum_{i\in V}\mathbf{f}_{i}^{*}(\mathbf{z}_{i})-\sum_{\beta\in F}\delta_{H_{\beta}^{\perp}}(\mathbf{z}_{\beta})-\sum_{\alpha\in V\cup E\cup\{r\}}\frac{s_{\alpha}}{2}\left\Vert \bar{m}-\frac{1}{s_{\alpha}}\left[\sum_{\alpha_{2}\in V\cup F}\mathbf{z}_{\alpha_{2}}\right]_{\alpha}\right\Vert ^{2}+C.\label{eq:dual-1}
\end{equation}
The case when $s_{\alpha}=1$ for all $\alpha\in V$ and $s_{\alpha}=0$
for all $\alpha\in E$ has been discussed in detail in \cite{Pang_Dist_Dyk,Pang_sub_Dyk,Pang_rate_D_Dyk,Pang_level_sets_Dyk}.
The treatment there (which traces to the original work in \cite{BD86})
implies that there is strong duality between \eqref{eq:2nd-primal}
and \eqref{eq:dual-1}, even if dual optimizers may not exist. For
convenience, instead of considering \eqref{eq:dual-1}, we consider
\begin{equation}
\begin{array}{c}
\underset{{\mathbf{z}_{\alpha}\in[\mathbb{R}^{m}]^{|V\cup E\cup\{r\}|}\atop \alpha\in V\cup F}}{\inf}\underset{i\in V}{\sum}\mathbf{f}_{i}^{*}(\mathbf{z}_{i})+\underset{\beta\in F}{\sum}\delta_{H_{\beta}^{\perp}}(\mathbf{z}_{\beta})+\underset{\alpha\in V\cup E}{\sum}\frac{s_{\alpha}}{2}\left\Vert \bar{m}-\frac{1}{s_{\alpha}}\left[\underset{\alpha_{2}\in V\cup F}{\sum}\mathbf{z}_{\alpha_{2}}\right]_{\alpha}\right\Vert ^{2}.\end{array}\label{eq:dual-2}
\end{equation}

\begin{rem}
\label{rem:On-index-r}(On the index $r$) Notice that $s_{r}$ and
$y_{r}$ remain as zero throughout Algorithm \ref{alg:undir-alg},
and $\mathbf{z}_{\{r,\alpha\}}$ also remains as zero for all $\alpha\in V\cup E$
as well. We introduced this additional index $r$ in order to simplify
the convergence proof in Section \ref{sec:Convergence-analysis}.
\end{rem}

We have the following properties:
\begin{prop}
\label{prop:sparsity}(Sparsity) The following results hold:
\begin{enumerate}
\item If $i\in V$, then $\mathbf{z}_{i}\in[\mathbb{R}^{m}]^{|V\cup E\cup\{r\}|}$
is such that $[\mathbf{z}_{i}]_{\alpha}=0$ for all $\alpha\in[V\cup E\cup\{r\}]\backslash\{i\}$. 
\item If $\{\alpha_{1},\alpha_{2}\}\in F$, then $\mathbf{z}_{\{\alpha_{1},\alpha_{2}\}}\in[\mathbb{R}^{m}]^{|V\cup E\cup\{r\}|}$
is such that $[\mathbf{z}_{\{\alpha_{1},\alpha_{2}\}}]_{\alpha}=0$
for all $\alpha\in[V\cup E\cup\{r\}]\backslash\{\alpha_{1},\alpha_{2}\}$,
and $[\mathbf{z}_{\{\alpha_{1},\alpha_{2}\}}]_{\alpha_{1}}+[\mathbf{z}_{\{\alpha_{1},\alpha_{2}\}}]_{\alpha_{2}}=0$. 
\end{enumerate}
\end{prop}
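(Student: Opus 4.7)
The plan is to verify the two claims by characterizing the effective domains of the two classes of summands in \eqref{eq:dual-2} in which $\mathbf{z}_i$ and $\mathbf{z}_\beta$ appear, namely $\mathbf{f}_i^*$ and $\delta_{H_\beta^\perp}$. Any $\mathbf{z}_i$ or $\mathbf{z}_\beta$ violating the stated sparsity would render the corresponding term (and hence the whole dual objective) equal to $+\infty$, so restricting to the effective domain gives the claimed structure.

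For part (1), the key observation is that $\mathbf{f}_i(\mathbf{x}) = f_i([\mathbf{x}]_i)$ depends only on the $i$-th block of $\mathbf{x}$. Writing out the Fenchel conjugate
\[
\mathbf{f}_i^*(\mathbf{z}) = \sup_{\mathbf{x}} \Big\{ \sum_{\alpha \in V \cup E \cup \{r\}} \langle [\mathbf{z}]_\alpha, [\mathbf{x}]_\alpha\rangle - f_i([\mathbf{x}]_i)\Big\},
\]
one sees that whenever $[\mathbf{z}]_\alpha \neq 0$ for some $\alpha \neq i$, varying $[\mathbf{x}]_\alpha$ along the direction $[\mathbf{z}]_\alpha$ drives the supremum to $+\infty$, since $f_i$ imposes no penalty on those coordinates. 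Hence $\dom \mathbf{f}_i^* \subseteq \{\mathbf{z}: [\mathbf{z}]_\alpha = 0 \text{ for all } \alpha \neq i\}$, which is exactly claim (1). (As a bonus, within this domain one recovers $\mathbf{f}_i^*(\mathbf{z}) = f_i^*([\mathbf{z}]_i)$.)

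For part (2), I treat $H_\beta$ with $\beta = \{\alpha_1,\alpha_2\}$ as the kernel of the linear map $\mathbf{x}\mapsto [\mathbf{x}]_{\alpha_1}-[\mathbf{x}]_{\alpha_2}$, so it is a linear subspace. A general element of $H_\beta$ is parametrized by a common value $v\in\mathbb{R}^m$ with $[\mathbf{x}]_{\alpha_1}=[\mathbf{x}]_{\alpha_2}=v$ and by arbitrary $[\mathbf{x}]_\alpha\in\mathbb{R}^m$ for $\alpha\notin\{\alpha_1,\alpha_2\}$. Requiring $\langle\mathbf{z},\mathbf{x}\rangle = 0$ for all such $\mathbf{x}$ separates into the two independent conditions $\langle[\mathbf{z}]_{\alpha_1}+[\mathbf{z}]_{\alpha_2},v\rangle=0$ for all $v$ and $\langle[\mathbf{z}]_\alpha,[\mathbf{x}]_\alpha\rangle=0$ for each free coordinate, yielding precisely the two properties in (2). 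Since $\delta_{H_\beta^\perp}(\mathbf{z}_\beta)$ is finite in \eqref{eq:dual-2} exactly when $\mathbf{z}_\beta\in H_\beta^\perp$, claim (2) follows.

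I do not anticipate any real obstacle: both parts are elementary conjugate/orthogonal-complement computations forced by the product structure of $\mathbf{f}_i$ and the two-variable description of $H_\beta$. The only bookkeeping point to flag is the convention that the dual variables in \eqref{eq:dual-2} are implicitly restricted to the effective domains of the summands in which they appear, so that the sparsity assertion is a genuine structural consequence rather than an assumption built into the notation.
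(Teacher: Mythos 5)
Your proof is correct and follows essentially the same route as the paper, which simply notes that part (1) comes from $\mathbf{f}_i$ depending only on the $i$-th coordinate and part (2) from $\delta_{H_\beta}^* = \delta_{H_\beta^\perp}$ together with the description of $H_\beta^\perp$; you have merely written out the elementary conjugate and orthogonal-complement computations that the paper defers to its earlier reference.
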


\begin{proof}
The proof is elementary and exactly the same as that in \cite{Pang_Dist_Dyk}.
(Part (1) makes use of the fact that $\mathbf{f}_{i}(\cdot)$ depends
on only the $i$-th coordinate of the input, while part (2) makes
use of the fact that $\delta_{H_{\{\alpha_{1},\alpha_{2}\}}}^{*}(\cdot)=\delta_{H_{\{\alpha_{1},\alpha_{2}\}}^{\perp}}(\cdot)$,
and $\delta_{H_{\{\alpha_{1},\alpha_{2}\}}^{\perp}}(\mathbf{z}_{\{\alpha_{1},\alpha_{2}\}})<\infty$
implies the conclusions in (2).)
\end{proof}
We now describe Algorithm \vref{alg:undir-alg}. In order to link
Algorithm \ref{alg:undir-alg} with the dual objective function \eqref{eq:dual-2},
we define\begin{subequations}\label{eq_m:y-s-x}
\begin{eqnarray}
y_{(i,j)} & := & \sigma_{i,y}-\rho_{(i,j),y}\text{ for all }(i,j)\in E\label{eq:y-s-x-y}\\
s_{(i,j)} & := & \sigma_{i,s}-\rho_{(i,j),s}\text{ for all }(i,j)\in E\label{eq:y-s-x-s}\\
x_{\alpha} & := & y_{\alpha}/s_{\alpha}\text{ for all }\alpha\in V\cup E\cup\{r\}.\label{eq:y-s-x-x}
\end{eqnarray}
\end{subequations}As we have seen in \cite{Bof_Carli_Schenato_2017},
the data $\sigma_{i,y}$ and $\sigma_{i,s}$ represent data transmitted
by node $i$, and $\rho_{(i,j),y}$ and $\rho_{(i,j),s}$ represent
data from node $i$ that has been received by node $j$ through the
edge $(i,j)$. So $\sigma_{i,y}-\rho_{(i,j),y}$ and $\sigma_{i,s}-\rho_{(i,j),s}$
represent data that have been transmitted by node $i$ to node $j$
along edge $(i,j)$ that have not yet been received by node $j$.
Figure \ref{fig:The-fig} illustrates Operations $A$ and $B$ of
the algorithm in \cite{Bof_Carli_Schenato_2017}. Hence using $y_{(i,j)}$
and $s_{(i,j)}$ to represent these data is natural. It is clear that
if $s_{\alpha}=0$, then $y_{\alpha}=0$. In such a case, the choice
of $x_{\alpha}$ is irrelevant. We want $\{\mathbf{z}_{\alpha}\}_{\alpha\in V\cup F}$
to satisfy 
\begin{equation}
\bar{m}-\frac{1}{s_{\alpha}}\Big[\underset{\alpha_{2}\in V\cup F}{\overset{\phantom{\alpha_{2}VF}}{\sum}}\mathbf{z}_{\alpha_{2}}\Big]_{\alpha}^{\phantom{\alpha}}=x_{\alpha}\text{ for all }\alpha\in V\cup E\cup\{r\}\text{ such that }s_{\alpha}>0.\label{eq:x-alpha-relation}
\end{equation}
We now explain \eqref{eq:x-alpha-relation} further. Algorithm \ref{alg:undir-alg}
starts with $s_{\alpha}^{0}=1$ if $\alpha\in V$, and zero otherwise,
and $y_{\alpha}^{0}$ is such that $\frac{1}{|V|}\sum_{i\in V}y_{i}^{0}=\bar{m}$.
So a possible choice of $y_{i}^{0}$ is $\bar{x}_{i}$, as defined
in \eqref{eq:dist_opt_pblm}. Recall $\{\mathbf{z}_{\alpha}^{0}\}_{\alpha\in F}$
are to be defined to satisfy Proposition \ref{prop:sparsity}(2),
and that as long as $\frac{1}{|V|}\sum_{i\in V}y_{i}^{0}=\bar{m}$,
$\{\mathbf{z}_{\alpha}^{0}\}_{\alpha\in F}$ can be chosen to satisfy
\eqref{eq:x-alpha-relation}. It is clear to see that operations $A$
and $B$ can be written as a composition of operations $D$ and $E$.
In Theorem \ref{thm:recurrence}, we shall prove that throughout Algorithm
\ref{alg:undir-alg}, $\{\mathbf{z}_{\alpha}\}_{\alpha\in F}$ can
be chosen so that \eqref{eq:x-alpha-relation} is satisfied. 

For now, the only new part in Algorithm \ref{alg:Op_ABC} compared
to \cite{Bof_Carli_Schenato_2017} is operation $C$. Let the tuple
$T$ be defined as 
\begin{equation}
T=(\{s_{\alpha}\}_{\alpha\in V\cup E\cup\{r\}},\{y_{\alpha}\}_{\alpha\in V\cup E\cup\{r\}},\{x_{\alpha}\}_{\alpha\in V\cup E\cup\{r\}},\{\mathbf{z}_{\alpha}\}_{\alpha\in V\cup F}),\label{eq:tuple}
\end{equation}
and define $T^{k}$ similarly. Define $\Val(T)$ as 
\[
\begin{array}{c}
\Val(T):=\underset{i\in V}{\sum}\mathbf{f}_{i}^{*}(\mathbf{z}_{i})+\underset{\beta\in F}{\sum}\delta_{H_{\beta}^{\perp}}(\mathbf{z}_{\beta})+\underset{\alpha\in V\cup E}{\overset{\phantom{\alpha V}}{\sum}}\frac{s_{\alpha}}{2}\left\Vert x_{\alpha}\right\Vert ^{2}.\end{array}
\]
\begin{algorithm}[h]
\begin{lyxalgorithm}
\label{alg:undir-alg} (Main algorithm) We have the following algorithm. 

$\quad$Start with $y_{\alpha}^{0}$ such that $\sum_{i\in V}y_{i}^{0}=|V|\bar{m}$,
and $y_{\alpha}^{0}=0$ for all $\alpha\in E\cup\{r\}$. 

$\quad$Start with $s_{\alpha}^{0}$ such that $s_{i}=1$ for all
$i\in V$ and $s_{\alpha}^{0}=0$ for all $\alpha\in E\cup\{r\}$.

$\quad$Start with $\mathbf{z}_{i}=0$ for all $i\in V$. 

$\quad$Start with $\sigma_{i,y}^{0}=0$ and $\sigma_{i,s}^{0}=0$
for all $i\in V$.

$\quad$Start with $\rho_{(i,j),y}^{0}=0$ and $\rho_{(i,j),s}^{0}=0$
for all $(i,j)\in E$. 

$\quad$For $k=1,\dots$

$\quad$$\quad$\% Carry data from last iteration.

$\quad$$\quad$$y_{\alpha}^{k}=y_{\alpha}^{k-1}$ and $s_{\alpha}^{k}=s_{\alpha}^{k-1}$
for all $\alpha\in V\cup E$ 

$\quad$$\quad$$\sigma_{i,y}^{k}=\sigma_{i,y}^{k-1}$, $\sigma_{i,s}^{k}=\sigma_{i,s}^{k-1}$
and $[\mathbf{z}_{i}^{k}]_{i}=[\mathbf{z}_{i}^{k-1}]_{i}$ for all
$i\in V$

$\quad$$\quad$$\rho_{(i,j),y}^{k}=\rho_{(i,j),y}^{k-1}$ and $\rho_{(i,j),s}^{k}=\rho_{(i,j),s}^{k-1}$
for all $(i,j)\in E$

$\quad$$\quad$Perform one of operation A, B or C in Algorithm \ref{alg:Op_ABC}.

$\quad$end for
\begin{lyxalgorithm}
\label{alg:Op_ABC}(Operations $A$, $B$ and $C$) We describe operations
$A$, $B$ and $C$:

01$\quad$\textbf{$A$ (Node $i$ sends data) }

02$\quad$$\quad$Choose a node $i\in V$.

03$\quad$$\quad$$y_{i}^{k}=y_{i}^{k}/(\outdeg(i)+1)$; $s_{i}^{k}:=s_{i}^{k}/(\outdeg(i)+1)$

04$\quad$$\quad$$\sigma_{i,y}^{k}=\sigma_{i,y}^{k}+y_{i}^{k}$;
$\sigma_{i,s}^{k}=\sigma_{i,s}^{k}+s_{i}^{k}$.

05$\quad$\textbf{$B$ (Node $j$ receives data from $i$) }

06$\quad$$\quad$Choose edge $(i,j)\in E$ so that $j$ receives
data along $(i,j)$.

07$\quad$$\quad$$y_{j}^{k}=y_{j}^{k}+\sigma_{i,y}^{k}-\rho_{(i,j),y}^{k}$;
$s_{j}^{k}=s_{j}^{k}+\sigma_{i,s}^{k}-\rho_{(i,j),s}^{k}$

08$\quad$$\quad$$\rho_{(i,j),y}^{k}=\sigma_{i,y}^{k}$; $\rho_{(i,j),s}^{k}=\sigma_{i,s}^{k}$

09$\quad$\textbf{$C$ (Update $y_{j}$ and $[\mathbf{z}_{j}]_{j}$
by minimizing dual function)}

10$\quad$$\quad$Choose a node $j\in V$. 

11$\quad$$\quad$$x_{temp}=\frac{1}{s_{j}^{k}}(y_{j}^{k}+[\mathbf{z}_{j}^{k}]_{j})$

12$\quad$$\quad$$[\mathbf{z}_{j}^{k}]_{j}:=\underset{z}{\arg\min}\frac{s_{j}^{k}}{2}\|x_{temp}-\frac{1}{s_{j}^{k}}z\|^{2}+f_{j}^{*}(z)$

13$\quad$$\quad$$y_{j}^{k}=s_{j}^{k}x_{temp}-[\mathbf{z}_{j}^{k}]_{j}$
\begin{lyxalgorithm}
\label{alg:opns-D-E}(Operations $D$ and $E$) We describe operations
$D$ and $E$. The inputs are $\{s_{\alpha}^{\circ}\}_{\alpha\in V\cup E\cup\{r\}}$
and $\{y_{\alpha}^{\circ}\}_{\alpha\in V\cup E\cup\{r\}}$, and the
outputs are $\{s_{\alpha}^{+}\}_{\alpha\in V\cup E\cup\{r\}}$ and
$\{y_{\alpha}^{+}\}_{\alpha\in V\cup E\cup\{r\}}$. The $\mathbf{z}_{i}$
values remain unchanged for all $i\in V$.

14$\quad$\textbf{$D$ (Split with $r$) Suppose $s_{r}^{\circ}=0$.}

15$\quad$$\quad$Choose $\bar{\alpha}\in V\cup E$.

16$\quad$$\quad$Choose $s_{\bar{\alpha}}^{+}$ and $s_{r}^{+}$
to be such that $s_{\bar{\alpha}}^{+}+s_{r}^{+}=s_{\bar{\alpha}}^{\circ}$

17$\quad$$\quad$Let $y_{\bar{\alpha}}^{+}=\frac{s_{\bar{\alpha}}^{+}}{s_{\bar{\alpha}}^{\circ}}y_{\bar{\alpha}}^{\circ}$
and $y_{r}^{+}=\frac{s_{r}^{+}}{s_{\bar{\alpha}}^{\circ}}y_{\bar{\alpha}}^{\circ}$.

18$\quad$$\quad$$s_{\alpha}^{+}=s_{\alpha}^{\circ}$ and $y_{\alpha}^{+}=y_{\alpha}^{\circ}$
for all $\alpha\notin\{r,\bar{\alpha}\}$, and $[\mathbf{z}_{i}^{+}]_{i}=[\mathbf{z}_{i}^{\circ}]_{i}$
for all $i\in V$.

19$\quad$\textbf{$E$ (Combine with $r$) Suppose $s_{r}^{\circ}>0$.}

20$\quad$$\quad$Choose $\bar{\alpha}_{2}\in V\cup E$.

21$\quad$$\quad$Let $s_{\bar{\alpha}_{2}}^{+}=s_{\bar{\alpha}_{2}}^{\circ}+s_{r}^{\circ}$
and $s_{r}^{+}=0$.

22$\quad$$\quad$Let $y_{\bar{\alpha}_{2}}^{+}=y_{\bar{\alpha}_{2}}^{\circ}+y_{r}^{\circ}$
and $y_{r}^{+}=0$.

23$\quad$$\quad$$s_{\alpha}^{+}=s_{\alpha}^{\circ}$ and $y_{\alpha}^{+}=y_{\alpha}^{\circ}$
for all $\alpha\notin\{r,\bar{\alpha}_{2}\}$, and $[\mathbf{z}_{i}^{+}]_{i}=[\mathbf{z}_{i}^{\circ}]_{i}$
for all $i\in V$.
\end{lyxalgorithm}

\end{lyxalgorithm}

\end{lyxalgorithm}

\end{algorithm}
It is clear from the minimization step in line 12 that if $T^{k+1}$
is obtained from $T^{k}$ using operation $C$, then $\Val(T^{k+1})\leq\Val(T^{k})$.
Through duality and \eqref{eq:y-s-x-x}, the problem of finding new
$[\mathbf{z}_{j}^{k}]_{j}$ and $y_{j}^{k}$ in lines 12 and 13 can
be rewritten to solve a primal problem instead: 

12$^{\prime}\quad$$x_{j}^{k}=\underset{x}{\arg\min}\frac{s_{j}^{k}}{2}\|x_{temp}-x\|^{2}+f_{j}(x)$

13$^{\prime}\quad$$[\mathbf{z}_{j}^{k}]_{j}=s_{j}^{k}(x_{temp}-x_{j}^{k})$

In Section \ref{sec:Convergence-analysis}, we shall analyze operations
$D$ and $E$ in order to draw conclusions about Algorithm \ref{alg:undir-alg}.

\begin{figure}[h]
\begin{tabular}{|ccccc|}
\hline 
\begin{tabular}{c}
\includegraphics[scale=0.25]{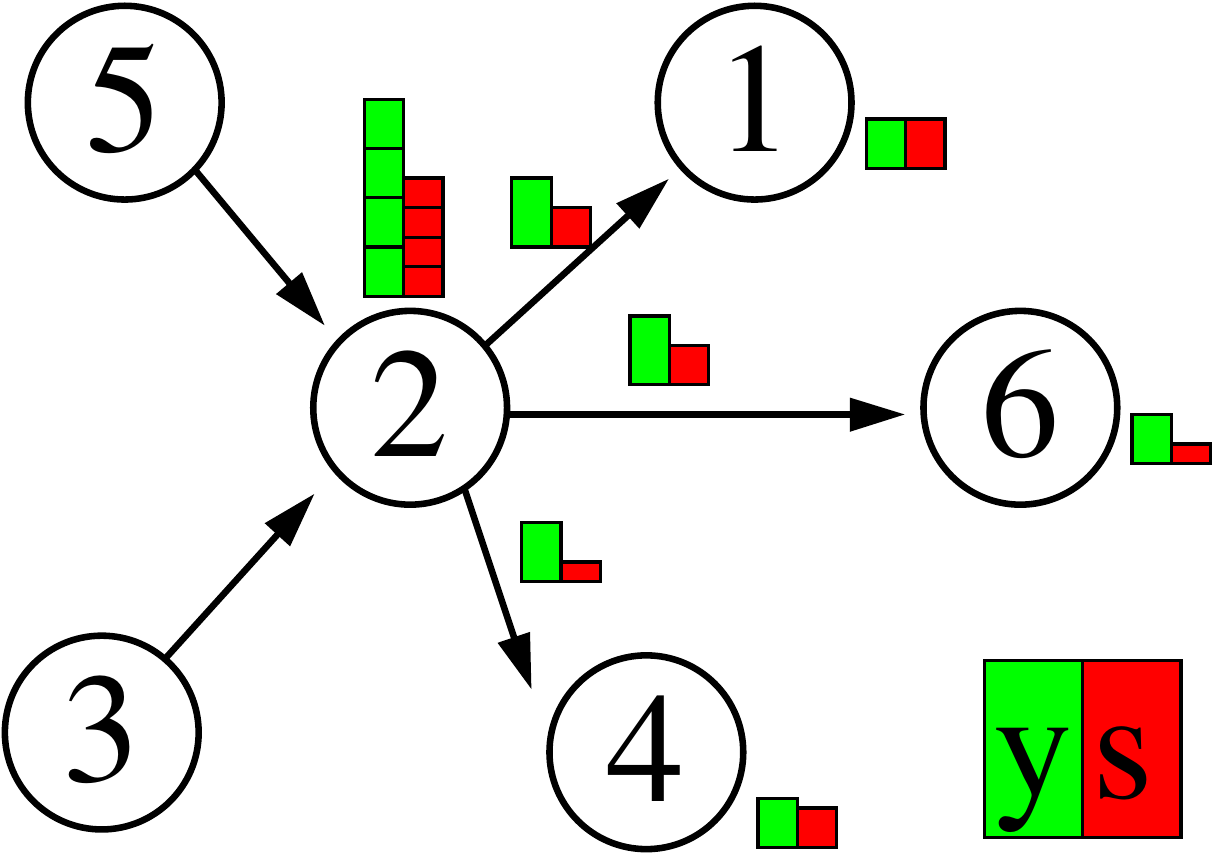}\tabularnewline
\end{tabular} & $\!\!\!\!\!\!\!\!\!\!$%
\begin{tabular}{c}
$\xrightarrow{\scriptsize{\text{Oper. A}}}$\tabularnewline
\end{tabular}$\!\!\!\!\!\!\!\!\!\!\!\!\!\!\!\!\!\!\!$ & %
\begin{tabular}{c}
\includegraphics[scale=0.25]{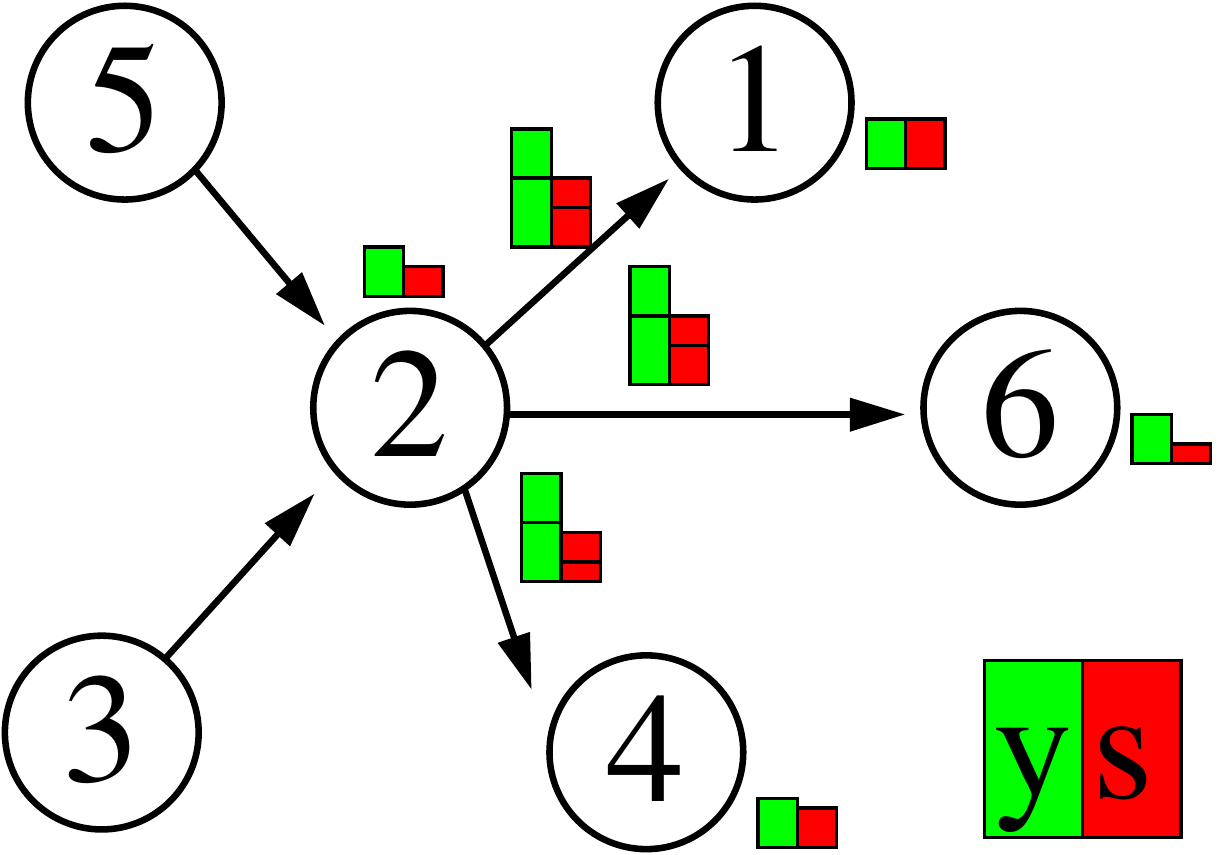}\tabularnewline
\end{tabular} & $\!\!\!\!\!\!\!\!\!$%
\begin{tabular}{c}
$\xrightarrow{\scriptsize{\text{Oper. B}}}$\tabularnewline
\end{tabular}$\!\!\!\!\!\!\!\!\!\!\!\!\!\!\!\!\!\!\!$ & %
\begin{tabular}{c}
\includegraphics[scale=0.25]{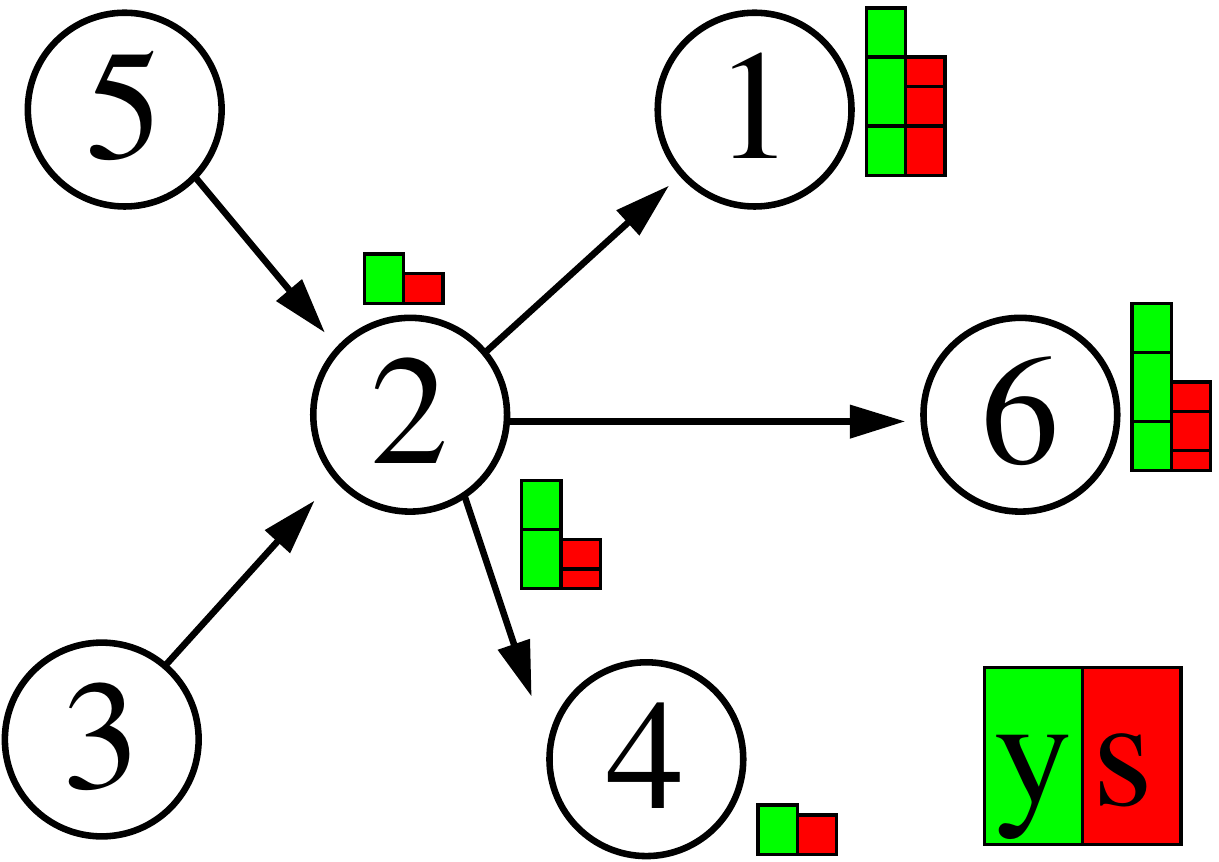}\tabularnewline
\end{tabular}\tabularnewline
\hline 
\multicolumn{5}{|c|}{%
\begin{tabular}{c}
\includegraphics[scale=0.22]{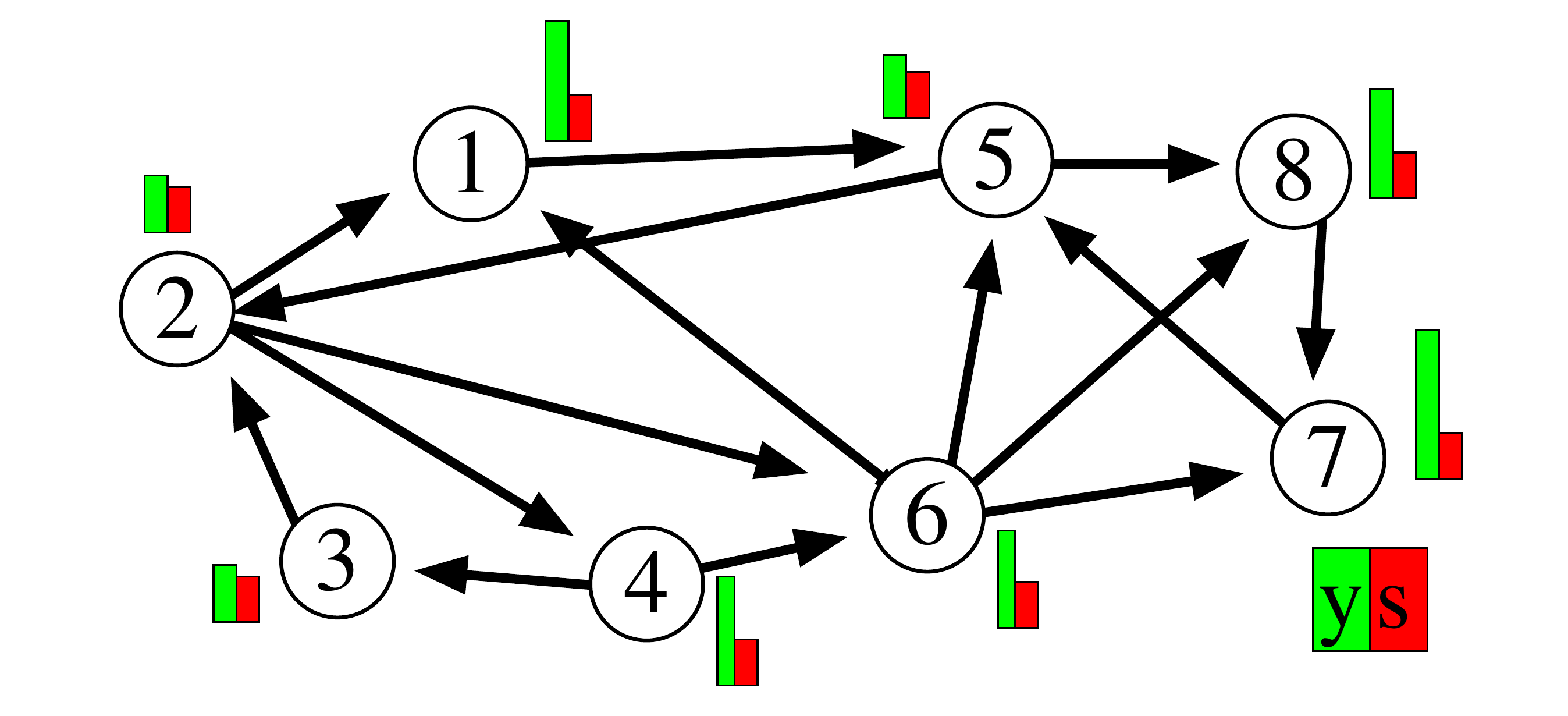}\tabularnewline
\end{tabular}$\!\!\!\!\!\!\!\!\to\!\!\!\!\!\!\!\!\!$%
\begin{tabular}{c}
\includegraphics[scale=0.22]{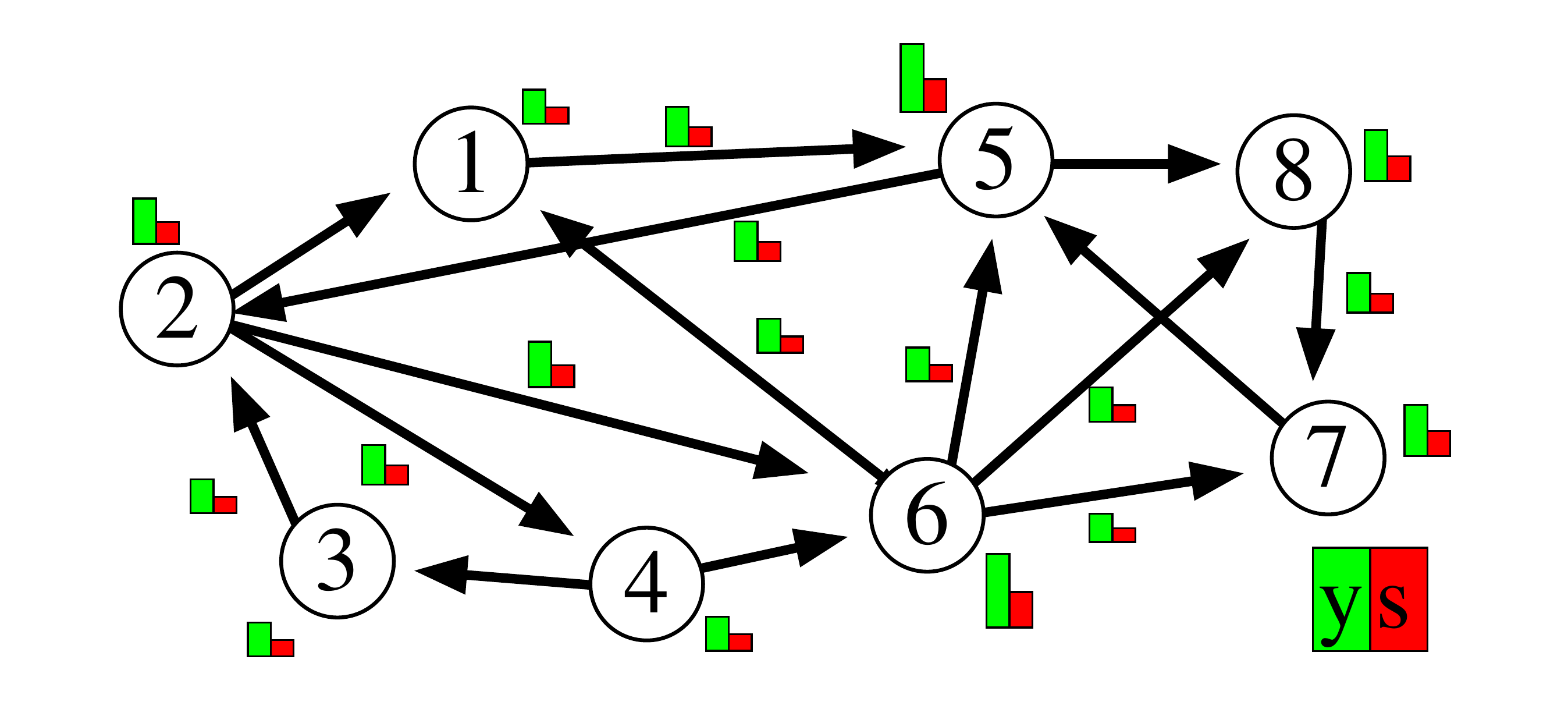}\tabularnewline
\end{tabular}}\tabularnewline
\hline 
\end{tabular}

\caption{\label{fig:The-fig}The top diagram illustrates Operations $A$ and
$B$ in Algorithm \ref{alg:Op_ABC} due to \cite{Bof_Carli_Schenato_2017}.
See Remark \ref{rem:on-opn-B}. The bottom diagram illustrates that
in \cite{Bof_Carli_Schenato_2017}, after many iterations, the value
$y_{\alpha}/s_{\alpha}\protect\overset{\eqref{eq:y-s-x-x}}{=}x_{\alpha}$
converges to the desired average $\frac{1}{|V|}\sum_{i\in V}\bar{x}_{i}$
for all $\alpha\in V\cup E$. }
\end{figure}

\begin{rem}
\label{rem:on-opn-B}In Figure \ref{fig:The-fig}, we show a case
where the data sent by node 2 along edge (2,4) has not yet been received
by node $4$. As explained in \cite{Bof_Carli_Schenato_2017} and
the relevant background works, this represents information that is
delayed and not lost.
\end{rem}

\section{\label{sec:Convergence-analysis}Convergence analysis}

In this section, we prove the convergence of Algorithm \ref{alg:undir-alg}.
 We show that operations $D$ and $E$ result in a nonincreasing
dual objective value $\Val(\cdot)$, and that they preserve the relations
\eqref{eq:x-alpha-relation} and \eqref{eq_m:y-s-x}. Since operation
$C$ is already easily seen to result in a nonincreasing dual objective
value, we would then see that $\{\Val(T^{k})\}_{k}$ is a nonincreasing
sequence. We then end by showing that under reasonable conditions,
$\{\Val(T^{k})\}_{k}$ converges to the minimum value of \eqref{eq:dual-2}.
This, together with strong duality implied from the distributed Dykstra's
algorithm and \eqref{eq:big-formula} show the convergence of all
the $\{x_{i}\}_{i\in V}$ to the primal minimizer. 

\subsection{\label{subsec:D-and-E}Operations $D$ and $E$}

First, we recall the operations $A$, $B$ and $C$ in Algorithm \ref{alg:Op_ABC}
and Operations $D$ and $E$ in Algorithm \ref{alg:opns-D-E}. It
is clear that operations $A$ and $B$ can be written as the composition
of a finite number of operations $D$ and $E$. 

\begin{thm}
\label{thm:recurrence} (Operations $D$ and $E$) Consider the following
conditions. 
\begin{itemize}
\item [(A)]The tuple $T$ defined in \eqref{eq:tuple} satisfies \eqref{eq:x-alpha-relation}
and \eqref{eq:y-s-x-x}.
\item [(B)]$\mathbf{z}_{\{r,\alpha\}}^{\circ}=0$ for all $\alpha\in V\cup E$,
$s_{r}^{\circ}=0$ and $y_{r}^{\circ}=0$.
\item [(C)]There is some $\bar{\alpha}_{1}\in V\cup E$ such that $\mathbf{z}_{\{r,\alpha\}}^{\circ}=0$
for all $\alpha\neq\bar{\alpha}_{1}$. 
\end{itemize}
Then the following hold. 
\begin{enumerate}
\item Suppose condition (A) is satisfied for the tuple 
\begin{equation}
T^{\circ}=(\{s_{\alpha}^{\circ}\}_{\alpha\in V\cup E\cup\{r\}},\{y_{\alpha}^{\circ}\}_{\alpha\in V\cup E\cup\{r\}},\{x_{\alpha}^{\circ}\}_{\alpha\in V\cup E\cup\{r\}},\{\mathbf{z}_{\alpha}^{\circ}\}_{\alpha\in V\cup F})\label{eq:tuple-1}
\end{equation}
and condition (B) is satisfied at the start of operation $D$ in Algorithm
\ref{alg:opns-D-E}. Then we can find $\{\mathbf{z}_{\alpha}^{+}\}_{\alpha\in V\cup F}$
such that the tuple $T^{+}$ defined in a similar manner to \eqref{eq:tuple-1}
satisfies conditions (A) and (C). Moreover, $\Val(T^{+})=\Val(T^{\circ})$. 
\item Suppose condition (A) is satisfied for the tuple $T^{\circ}$, condition
(C) is satisfied at the start of operation $E$, and $\{\bar{\alpha}_{1},\bar{\alpha}_{2}\}\in F$.
Then we can find $\{\mathbf{z}_{\alpha}^{+}\}_{\alpha\in V\cup F}$
such that the tuple $T^{+}$ satisfies conditions (A) and (B). Moreover,
$\Val(T^{+})\leq\Val(T^{\circ})$.
\end{enumerate}
\end{thm}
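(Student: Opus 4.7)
The plan is to construct, for each operation, new dual variables $\{\mathbf{z}_\alpha^+\}_{\alpha\in V\cup F}$ by transferring ``mass'' between the pairs $\{r,\bar{\alpha}\}$ (for part 1) and $\{r,\bar{\alpha}_1\},\{\bar{\alpha}_1,\bar{\alpha}_2\}$ (for part 2), then verify in turn: sparsity via Proposition \ref{prop:sparsity}(2), condition (A) coordinate by coordinate, the output hypothesis (C) or (B), and finally the change in $\Val$. Throughout I interpret the quadratic piece of $\Val$ with the convention that indices $\alpha$ with $s_\alpha=0$ contribute $0$, equivalently summing over $V\cup E\cup\{r\}$; this is consistent with the paper's definition since $s_r=0$ at the boundary of each macro-step $A$, $B$, $C$.

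For part (1), operation $D$ preserves ratios: $x_{\bar{\alpha}}^+=x_r^+=x_{\bar{\alpha}}^\circ$. By condition (B), $[\sum_{\alpha_2}\mathbf{z}_{\alpha_2}^\circ]_r=0$, and by (A) at $T^\circ$, $[\sum_{\alpha_2}\mathbf{z}_{\alpha_2}^\circ]_{\bar{\alpha}}=s_{\bar{\alpha}}^\circ(\bar{m}-x_{\bar{\alpha}}^\circ)$. I set
\[
[\mathbf{z}_{\{r,\bar{\alpha}\}}^+]_{\bar{\alpha}} := -s_r^+(\bar{m}-x_{\bar{\alpha}}^\circ),\qquad [\mathbf{z}_{\{r,\bar{\alpha}\}}^+]_r := s_r^+(\bar{m}-x_{\bar{\alpha}}^\circ),
\]
with all other dual variables unchanged. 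The two entries are opposite (Proposition \ref{prop:sparsity}(2)), (A) follows by substitution at both $\bar{\alpha}$ (using $s_{\bar{\alpha}}^\circ-s_r^+=s_{\bar{\alpha}}^+$) and $r$, and (C) holds with $\bar{\alpha}_1:=\bar{\alpha}$. Since $x_{\bar{\alpha}}^+=x_r^+=x_{\bar{\alpha}}^\circ$ and $s_{\bar{\alpha}}^++s_r^+=s_{\bar{\alpha}}^\circ$, the quadratic contributions split exactly as
\[
\frac{s_{\bar{\alpha}}^\circ}{2}\|x_{\bar{\alpha}}^\circ\|^2 = \frac{s_{\bar{\alpha}}^+}{2}\|x_{\bar{\alpha}}^+\|^2 + \frac{s_r^+}{2}\|x_r^+\|^2,
\]
giving $\Val(T^+)=\Val(T^\circ)$.

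For part (2), operation $E$ produces the weighted average $x_{\bar{\alpha}_2}^+=\lambda x_{\bar{\alpha}_2}^\circ+(1-\lambda)x_r^\circ$ with $\lambda:=s_{\bar{\alpha}_2}^\circ/s_{\bar{\alpha}_2}^+$. Let $w:=[\mathbf{z}_{\{r,\bar{\alpha}_1\}}^\circ]_{\bar{\alpha}_1}$, so $[\mathbf{z}_{\{r,\bar{\alpha}_1\}}^\circ]_r=-w$ by Proposition \ref{prop:sparsity}(2) and every other $\mathbf{z}_{\{r,\cdot\}}^\circ$ vanishes by (C). Using the hypothesis $\{\bar{\alpha}_1,\bar{\alpha}_2\}\in F$, I transfer the mass:
\[
\mathbf{z}_{\{r,\bar{\alpha}_1\}}^+:=0,\quad [\mathbf{z}_{\{\bar{\alpha}_1,\bar{\alpha}_2\}}^+]_{\bar{\alpha}_1}:=[\mathbf{z}_{\{\bar{\alpha}_1,\bar{\alpha}_2\}}^\circ]_{\bar{\alpha}_1}+w,\quad [\mathbf{z}_{\{\bar{\alpha}_1,\bar{\alpha}_2\}}^+]_{\bar{\alpha}_2}:=[\mathbf{z}_{\{\bar{\alpha}_1,\bar{\alpha}_2\}}^\circ]_{\bar{\alpha}_2}-w,
\]
all others unchanged. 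Sparsity and condition (B) are immediate. For (A), the $\pm w$ shifts cancel at $\bar{\alpha}_1$, the relation is vacuous at $r$ (since $s_r^+=0$), and at $\bar{\alpha}_2$ the combine identity
\[
s_{\bar{\alpha}_2}^+(\bar{m}-x_{\bar{\alpha}_2}^+) = s_{\bar{\alpha}_2}^\circ(\bar{m}-x_{\bar{\alpha}_2}^\circ)+s_r^\circ(\bar{m}-x_r^\circ) = \Big[\textstyle\sum_{\alpha_2}\mathbf{z}_{\alpha_2}^\circ\Big]_{\bar{\alpha}_2}-w
\]
(via (A) at $\circ$ and (C) giving $[\sum_{\alpha_2}\mathbf{z}_{\alpha_2}^\circ]_r=-w$) matches precisely the $-w$ change in the $\bar{\alpha}_2$-entry of $\mathbf{z}_{\{\bar{\alpha}_1,\bar{\alpha}_2\}}$. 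The $\mathbf{f}_i^*$ and $\delta_{H_\beta^\perp}$ terms of $\Val$ are unchanged; for the quadratic part, convexity of $\|\cdot\|^2/2$ applied to $x_{\bar{\alpha}_2}^+$ yields
\[
\frac{s_{\bar{\alpha}_2}^+}{2}\|x_{\bar{\alpha}_2}^+\|^2 \le \frac{s_{\bar{\alpha}_2}^\circ}{2}\|x_{\bar{\alpha}_2}^\circ\|^2 + \frac{s_r^\circ}{2}\|x_r^\circ\|^2,
\]
and the $r$-contribution vanishes after $E$, so $\Val(T^+)\le\Val(T^\circ)$.

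The main technical point is the coordinate-wise bookkeeping for (A) in part (2): one must enumerate which summands of $[\sum_{\alpha_2}\mathbf{z}_{\alpha_2}]_\alpha$ change at each of $\bar{\alpha}_1,\bar{\alpha}_2,r$, invoke Proposition \ref{prop:sparsity}(2) to pair the two nonzero entries of each rank-two $\mathbf{z}_\beta$, and use (C) to eliminate contributions from $\mathbf{z}_{\{r,\alpha\}}^\circ$ with $\alpha\ne\bar{\alpha}_1$. Degenerate cases such as $s_{\bar{\alpha}}^+=0$ in part (1) merely void (A) at that coordinate and do not affect the construction.
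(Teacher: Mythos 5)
Your proof is correct and takes essentially the same route as the paper: the same explicit mass-transfer constructions for $\mathbf{z}_{\{r,\bar{\alpha}\}}^{+}$ and $\mathbf{z}_{\{\bar{\alpha}_{1},\bar{\alpha}_{2}\}}^{+}$ (your $s_{r}^{+}(\bar{m}-x_{\bar{\alpha}}^{\circ})$ equals the paper's $\frac{s_{r}^{+}}{s_{\bar{\alpha}}^{\circ}}[\sum_{\beta}\mathbf{z}_{\beta}^{\circ}]_{\bar{\alpha}}$ via condition (A)), the same coordinate-wise verification of \eqref{eq:x-alpha-relation}, exact splitting of the quadratic term for $D$, and convexity of $\|\cdot\|^{2}$ for $E$. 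Your two refinements --- updating $\mathbf{z}_{\{\bar{\alpha}_{1},\bar{\alpha}_{2}\}}$ incrementally by $\pm v$ rather than overwriting it (needed when $\mathbf{z}_{\{\bar{\alpha}_{1},\bar{\alpha}_{2}\}}^{\circ}\neq0$), and stating explicitly that the quadratic sum in $\Val$ must be read over $V\cup E\cup\{r\}$ for the intermediate $D$/$E$ states --- are both sound and in fact tighten the paper's argument.
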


\begin{proof}
 We assume that \eqref{eq:y-s-x-x} holds throughout. To simplify
notations in the proof, all sums ``$\sum_{\beta}$'' in the proof
are of the form ``$\sum_{\beta\in V\cup F}$''. We first look at
Operation $D$.  Let $\mathbf{z}_{\{r,\bar{\alpha}\}}^{+}$ be such
that 
\begin{equation}
\begin{array}{c}
[\mathbf{z}_{\{r,\bar{\alpha}\}}^{+}]_{\alpha}=0\text{ for all }\alpha\text{\ensuremath{\notin}}\{r,\bar{\alpha}\}\text{, }[\mathbf{z}_{\{r,\bar{\alpha}\}}^{+}]_{r}=\frac{s_{r}^{+}}{s_{\bar{\alpha}}^{\circ}}\Big[\underset{\beta}{\sum}\mathbf{z}_{\beta}^{\circ}\Big]_{\bar{\alpha}}\text{, and }[\mathbf{z}_{\{r,\bar{\alpha}\}}^{+}]_{\bar{\alpha}}=-[\mathbf{z}_{\{r,\bar{\alpha}\}}^{+}]_{r},\end{array}\label{eq:def-z-1}
\end{equation}
and let all other $\mathbf{z}_{\alpha}^{+}$ be equal to $\mathbf{z}_{\alpha}^{\circ}$.
Since condition (B) holds for $T^{\circ}$ and \eqref{eq:def-z-1}
holds, condition (C) holds for $T^{+}$. So we only need to check
that \eqref{eq:x-alpha-relation} holds for $x_{\bar{\alpha}}^{+}$
and $x_{r}^{+}$. Note that 
\begin{equation}
\begin{array}{c}
\frac{1}{s_{\bar{\alpha}}^{+}}\Big[\underset{\beta}{\sum}\mathbf{z}_{\beta}^{+}\Big]_{\bar{\alpha}}\overset{\scriptsize{\text{\eqref{eq:def-z-1}}}}{=}\frac{1}{s_{\bar{\alpha}}^{+}}\left(\Big[\underset{\beta}{\overset{\phantom{\beta}}{\sum}}\mathbf{z}_{\beta}^{\circ}\Big]_{\bar{\alpha}}-\frac{s_{r}^{+}}{s_{\bar{\alpha}}^{\circ}}\Big[\underset{\beta}{\sum}\mathbf{z}_{\beta}^{\circ}\Big]_{\bar{\alpha}}\right)\overset{\scriptsize{\text{Line 16}}}{=}\frac{1}{s_{\bar{\alpha}}^{\circ}}\Big[\underset{\beta}{\sum}\mathbf{z}_{\beta}^{\circ}\Big]_{\bar{\alpha}}.\end{array}\label{eq:chain-1}
\end{equation}
(In the first equation for \eqref{eq:chain-1}, note that $\mathbf{z}_{\beta}^{+}=\mathbf{z}_{\beta}^{\circ}$
if $\beta\notin\{r,\bar{\alpha}\}$.) Since $\frac{s_{r}^{+}}{s_{\bar{\alpha}}^{\circ}}[\sum_{\beta}\mathbf{z}_{\beta}^{\circ}]_{\bar{\alpha}}\overset{\eqref{eq:def-z-1}}{=}[\mathbf{z}_{\{r,\bar{\alpha}\}}^{+}]_{r}\overset{\scriptsize{\text{Remark \ref{rem:On-index-r}}}}{=}[\sum_{\beta}\mathbf{z}_{\beta}^{+}]_{r}$,
we have 
\begin{equation}
\begin{array}{c}
\bar{m}-\frac{1}{s_{\bar{\alpha}}^{+}}\Big[\underset{\beta}{\sum}\mathbf{z}_{\beta}^{+}\Big]_{\bar{\alpha}}\overset{\eqref{eq:chain-1}}{=}\bar{m}-\frac{1}{s_{\bar{\alpha}}^{\circ}}\Big[\underset{\beta}{\overset{\phantom{\beta}}{\sum}}\mathbf{z}_{\beta}^{\circ}\Big]_{\bar{\alpha}}=\bar{m}-\frac{1}{s_{r}^{+}}\Big[\underset{\beta}{\sum}\mathbf{z}_{\beta}^{+}\Big]_{r}.\end{array}\label{eq:chain-2}
\end{equation}
So $x_{\bar{\alpha}}^{+}\overset{\scriptsize{\eqref{eq:y-s-x-x}}}{=}\frac{y_{\bar{\alpha}}^{+}}{s_{\bar{\alpha}}^{+}}\overset{\scriptsize{\text{Line 17}}}{=}\frac{y_{\bar{\alpha}}^{\circ}}{s_{\bar{\alpha}}^{\circ}}\overset{\scriptsize{\eqref{eq:y-s-x-x},\eqref{eq:x-alpha-relation}}}{=}\bar{m}-\frac{1}{s_{\bar{\alpha}}^{\circ}}\left[\sum_{\beta}\mathbf{z}_{\beta}^{\circ}\right]_{\bar{\alpha}}\overset{\scriptsize{\eqref{eq:chain-2}}}{=}\bar{m}-\frac{1}{s_{\bar{\alpha}}^{+}}\left[\sum_{\beta}\mathbf{z}_{\beta}^{+}\right]_{\bar{\alpha}}$,
which means \eqref{eq:x-alpha-relation} holds for $x_{\bar{\alpha}}^{+}$.
Similarly, \eqref{eq:x-alpha-relation} holds for $x_{r}^{+}$. In
fact, \eqref{eq:chain-2} also gives $x_{\bar{\alpha}}^{+}=x_{\bar{\alpha}}^{\circ}=x_{r}^{+}$,
which gives 
\[
\begin{array}{c}
\frac{s_{\bar{\alpha}}^{+}}{2}\left\Vert x_{\bar{\alpha}}^{+}\right\Vert ^{2}+\frac{s_{r}^{+}}{2}\left\Vert x_{r}^{+}\right\Vert ^{2}\overset{\scriptsize{\text{\eqref{eq:chain-2},\eqref{eq:y-s-x-x}}}}{=}\frac{s_{\bar{\alpha}}^{+}+s_{r}^{+}}{2}\left\Vert x_{\bar{\alpha}}^{\circ}\right\Vert ^{2}\overset{\scriptsize{\text{Line 16}}}{=}\frac{s_{\bar{\alpha}}^{\circ}}{2}\left\Vert x_{\bar{\alpha}}^{\circ}\right\Vert ^{2}.\end{array}
\]
 This in turn means $\Val(T^{+})=\Val(T^{\circ})$. 

We now look at operation $E$. By Proposition \ref{prop:sparsity}(2),
let $v\in\mathbb{R}^{m}$ be such that 
\begin{equation}
[\mathbf{z}_{\{r,\bar{\alpha}_{1}\}}^{\circ}]_{\alpha}=0\text{ for all }\alpha\text{\ensuremath{\notin}}\{r,\bar{\alpha}_{1}\},[\mathbf{z}_{\{r,\bar{\alpha}_{1}\}}^{\circ}]_{r}=v\text{, and }[\mathbf{z}_{\{r,\bar{\alpha}_{1}\}}^{\circ}]_{\bar{\alpha}_{1}}=-v.\label{eq:def-z-2}
\end{equation}
We then construct $\mathbf{z}_{\{\bar{\alpha}_{1},\bar{\alpha}_{2}\}}^{+}$
by 
\begin{equation}
[\mathbf{z}_{\{\bar{\alpha}_{1},\bar{\alpha}_{2}\}}^{+}]_{\alpha}=0\text{ for all }\alpha\text{\ensuremath{\notin}}\{\ensuremath{\bar{\alpha}_{1}},\bar{\alpha}_{2}\},[\mathbf{z}_{\{\bar{\alpha}_{1},\bar{\alpha}_{2}\}}^{+}]_{\bar{\alpha}_{1}}=-v\text{, and }[\mathbf{z}_{\{\bar{\alpha}_{1},\bar{\alpha}_{2}\}}^{+}]_{\bar{\alpha}_{2}}=v,\label{eq:def-z-3}
\end{equation}
and $\mathbf{z}_{\{r,\bar{\alpha}_{1}\}}^{+}=0$. All other $\mathbf{z}_{\alpha}^{+}$
shall be equal to $\mathbf{z}_{\alpha}^{\circ}$. Since condition
(C) holds for $T^{\circ}$, condition (B) holds for $T^{+}$. We now
check \eqref{eq:x-alpha-relation} for $x_{\bar{\alpha}_{2}}^{+}$.
We have
\begin{equation}
\begin{array}{c}
\Big[\underset{\beta}{\overset{\phantom{\beta}}{\sum}}\mathbf{z}_{\beta}^{\circ}\Big]_{\bar{\alpha}_{2}}+\Big[\underset{\beta}{\overset{\phantom{\beta}}{\sum}}\mathbf{z}_{\beta}^{\circ}\Big]_{r}\overset{\eqref{eq:def-z-2}}{=}\Big[\underset{\beta}{\overset{\phantom{\beta}}{\sum}}\mathbf{z}_{\beta}^{\circ}\Big]_{\bar{\alpha}_{2}}+v\overset{\eqref{eq:def-z-3}}{=}\Big[\underset{\beta}{\overset{\phantom{\beta}}{\sum}}\mathbf{z}_{\beta}^{+}\Big]_{\bar{\alpha}_{2}}.\end{array}\label{eq:alpha-bar-2-formula}
\end{equation}
Hence 
\begin{eqnarray*}
 &  & \begin{array}{c}
y_{\bar{\alpha}_{2}}^{+}\overset{\scriptsize{\text{Line 22}}}{=}y_{\bar{\alpha}_{2}}^{\circ}+y_{r}^{\circ}\end{array}\\
 & \overset{\eqref{eq:y-s-x-x},\eqref{eq:x-alpha-relation}}{=} & \begin{array}{c}
s_{\bar{\alpha}_{2}}^{\circ}\Big(\bar{m}-\frac{1}{s_{\bar{\alpha}_{2}}^{\circ}}\Big[\underset{\beta}{\overset{\phantom{\beta}}{\sum}}\mathbf{z}_{\beta}^{\circ}\Big]_{\bar{\alpha}_{2}}\Big)+s_{r}^{\circ}\Big(\bar{m}-\frac{1}{s_{r}^{\circ}}\Big[\underset{\beta}{\sum}\mathbf{z}_{\beta}^{\circ}\Big]_{r}\Big)\end{array}\\
 & \overset{\scriptsize{\text{Line 21,\eqref{eq:alpha-bar-2-formula}}}}{=} & \begin{array}{c}
s_{\bar{\alpha}_{2}}^{+}\Big(\bar{m}-\frac{1}{s_{\bar{\alpha}_{2}}^{+}}\Big[\underset{\beta}{\sum}\mathbf{z}_{\beta}^{+}\Big]_{\bar{\alpha}_{2}}\Big),\end{array}
\end{eqnarray*}
which, through \eqref{eq:y-s-x-x}, shows that \eqref{eq:x-alpha-relation}
holds for $x_{\bar{\alpha}_{2}}^{+}$. From the convexity of the norm-squared
function $\|\cdot\|^{2}$, we have 
\begin{equation}
\begin{array}{c}
\frac{s_{\bar{\alpha}_{2}}^{\circ}}{s_{\bar{\alpha}_{2}}^{\circ}+s_{r}^{\circ}}\|x_{\bar{\alpha}_{2}}^{\circ}\|^{2}+\frac{s_{r}^{\circ}}{s_{\bar{\alpha}_{2}}^{\circ}+s_{r}^{\circ}}\|x_{r}^{\circ}\|^{2}\geq\left\Vert \frac{s_{\bar{\alpha}_{2}}^{\circ}x_{\bar{\alpha}_{2}}^{\circ}+s_{r}^{\circ}x_{r}^{\circ}}{s_{\bar{\alpha}_{2}}^{\circ}+s_{r}^{\circ}}\right\Vert ^{2}\overset{\scriptsize{\text{Lines 21,22},\eqref{eq:x-alpha-relation}}}{=}\left\Vert \frac{y_{\bar{\alpha}_{2}}^{+}}{s_{\bar{\alpha}_{2}}^{+}}\right\Vert ^{2}\overset{\eqref{eq:x-alpha-relation}}{=}\left\Vert x_{\bar{\alpha}_{2}}^{+}\right\Vert ^{2}.\end{array}\label{eq:use-identity}
\end{equation}
The above inequality shows that $\Val(T^{+})\leq\Val(T^{\circ})$. 
\end{proof}

\subsection{Convergence result}

In this subsection, we prove our convergence result. 

Let $x^{*}$ be the optimal solution for \eqref{eq:dist_opt_pblm},
and $\mathbf{x}^{*}=\{x_{\alpha}^{*}\}_{\alpha\in V\cup E\cup\{r\}}$
be the optimal solution for \eqref{eq:2nd-primal}. It is clear that
$x_{\alpha}^{*}=x^{*}$ for all $\alpha\in V\cup E\cup\{r\}$. We
prove the boundedness of $\{x_{\alpha}^{k}\}_{k}$ for all $\alpha\in V\cup E\cup\{r\}$. 
\begin{thm}
\label{thm:bddness}(Boundedness of $\{x_{\alpha}\}$) Let $x^{*}$
be the optimal solution for \eqref{eq:dist_opt_pblm}. Suppose Algorithm
\ref{alg:undir-alg} is such that there is some $\bar{\epsilon}>0$
such that $s_{i}>\bar{\epsilon}$ for all $i\in V$. Then the iterates
$\{x_{\alpha}^{k}\}_{k}$ are bounded for all $\alpha\in V\cup E\cup\{r\}$. 
\end{thm}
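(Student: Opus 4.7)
The strategy is to exploit the monotonicity of the dual potential $\Val(T^k)$ --- which Theorem \ref{thm:recurrence} gives for operations $D$ and $E$ (and hence for $A$ and $B$ through their decomposition), while line 12 of Algorithm \ref{alg:Op_ABC} directly gives it for operation $C$ --- to obtain a uniformly bounded \emph{weighted} sum of $\|x_\alpha^k-x^*\|^2$, and then to propagate that bound to individual $x_\alpha^k$ by a convex-hull invariant.

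\textbf{Step 1 (weighted bound).} Since $\Val(T^k) \leq \Val(T^0) < \infty$, the indicators $\delta_{H_\beta^\perp}(\mathbf{z}_\beta^k)$ must all vanish. For each $i\in V$, Fenchel's inequality at $x^*$ gives $f_i^*([\mathbf{z}_i^k]_i) \geq \langle [\mathbf{z}_i^k]_i, x^*\rangle - f_i(x^*)$. Summing \eqref{eq:x-alpha-relation} over $\alpha\in V\cup E\cup\{r\}$ and eliminating the edge-block contributions via Proposition \ref{prop:sparsity}(2) yields $\sum_{i\in V}[\mathbf{z}_i^k]_i = |V|\bar m - \sum_\alpha s_\alpha^k x_\alpha^k$. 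Substituting into the expression for $\Val(T^k)$, completing the square in $x_\alpha^k - x^*$, and using the conservation law $\sum_\alpha s_\alpha^k = |V|$ (which is preserved by operations $A$, $B$, $C$), I obtain
\[
\sum_{\alpha\in V\cup E\cup\{r\}} \frac{s_\alpha^k}{2}\|x_\alpha^k - x^*\|^2 \;\leq\; \Val(T^0) - |V|\langle \bar m, x^*\rangle + \sum_{i\in V} f_i(x^*) + \tfrac{|V|}{2}\|x^*\|^2 \;=:\; M.
\]
For $\alpha = i\in V$, the hypothesis $s_i^k > \bar\epsilon$ immediately gives $\|x_i^k - x^*\| \leq \sqrt{2M/\bar\epsilon} =: R$.

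\textbf{Step 2 (convex-hull invariant).} For $\alpha \in E\cup\{r\}$, $s_\alpha^k$ need not be bounded below, so Step 1 alone is insufficient. I would establish by induction on $k$ that whenever $s_\alpha^k > 0$,
\[
x_\alpha^k \;\in\; \co\{x_i^\tau : i\in V,\ 0\leq \tau\leq k\}.
\]
Inspecting the updates: operation $A$ at $i$ leaves $x_i$ unchanged and replaces each outgoing $x_{(i,j)}^k$ by the convex combination of the old $x_{(i,j)}^{k-1}$ and $x_i^{k-1}$ (with weights $s_{(i,j)}^{k-1}$ and $s_i^k$); operation $B$ replaces $x_j^k$ by the convex combination of $x_j^{k-1}$ and $x_{(i,j)}^{k-1}$; and operations $D$, $E$ behave analogously (with $D$ even leaving all $x$'s unchanged, by the computation in the proof of Theorem \ref{thm:recurrence}(1)). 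In all these cases the new point lies in the hull of points already in the inductive hypothesis.

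\textbf{Main obstacle and conclusion.} The delicate case is operation $C$, whose proximal update on $x_j^k$ is not a convex combination of past iterates. The resolution is that operation $C$ touches only coordinate $j\in V$, so no $\alpha\in E\cup\{r\}$ changes, and the invariant is preserved there; meanwhile $x_j^k$ is trivially an element of the reference set $\{x_i^\tau : \tau\leq k\}$ at time $k$, so the invariant holds for $\alpha = j$ by inclusion. Once the invariant is in place, convexity of $\|\cdot - x^*\|$ combined with the uniform bound $\|x_i^\tau - x^*\|\leq R$ from Step 1 gives $\|x_\alpha^k - x^*\|\leq R$ for all $\alpha$ with $s_\alpha^k > 0$, which is the desired boundedness.
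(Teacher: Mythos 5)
Your argument is correct, and its first half coincides with the paper's: the paper likewise derives the weighted bound $\sum_{\alpha}\frac{s_{\alpha}^{k}}{2}\|x^{*}-x_{\alpha}^{k}\|^{2}\leq C_{0}$ from the Fenchel--Young inequality, the relation \eqref{eq:x-alpha-relation} and the monotonicity of $\Val(T^{k})$ (this is exactly your Step 1, written there as the Gaffke--Mathar duality-gap computation \eqref{eq:big-formula}), and then reads off $\|x_{i}^{k}-x^{*}\|<\sqrt{C_{0}/\bar{\epsilon}}$ for $i\in V$ from $s_{i}^{k}>\bar{\epsilon}$. Where you genuinely diverge is in passing from nodes to edges and to $r$. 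The paper stays with the weighted inequality and instead lower-bounds the edge masses: inspecting operation $A$, whenever $s_{e}^{k}>0$ it contains an increment $s_{i}^{k}/(\outdeg(i)+1)$, so either $s_{e}^{k}=0$ or $s_{e}^{k}>\bar{\epsilon}/|V|$, and the same weighted bound then gives $\|x^{*}-x_{e}^{k}\|<\sqrt{C_{0}|V|/\bar{\epsilon}}$. You instead establish the invariant $x_{\alpha}^{k}\in\co\{x_{i}^{\tau}:i\in V,\ \tau\leq k\}$ for all $\alpha$ with $s_{\alpha}^{k}>0$, which does hold: operations $A$ and $B$ (and $D$, $E$) update $x$-values by convex combinations weighted by the transported $s$-masses, and operation $C$ touches only a node coordinate; note also that $\sum_{\alpha}s_{\alpha}^{k}=|V|$ is indeed conserved, as your Step 1 requires. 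Both routes are valid. The paper's is shorter once the weighted bound is in hand; yours buys independence from any positive lower bound on the edge masses (it would survive even if $s_{e}^{k}$ could be positive but arbitrarily small) and gives the single uniform radius $\sqrt{2M/\bar{\epsilon}}$ for every coordinate. One caveat you share with the paper rather than a gap: both arguments implicitly assume the initial value $\Val(T^{0})$ (the paper's $C_{0}$) is finite, which amounts to $f_{i}^{*}(0)=-\inf f_{i}<\infty$ for all $i\in V$.
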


\begin{proof}
Recall $\mathbf{x}^{*}$ defined just before the statement of this
result. From Fenchel duality, we have 
\begin{equation}
\mathbf{f}_{i}(\mathbf{x}^{*})+\mathbf{f}_{i}^{*}(\mathbf{z}_{i})\geq\langle\mathbf{x}^{*},\mathbf{z}_{i}\rangle\text{ and }\delta_{H_{\beta}}(\mathbf{x}^{*})+\delta_{H_{\beta}^{\perp}}(\mathbf{z}_{\beta})\geq0.\label{eq:Fenchel-ineq}
\end{equation}
Let $v_{\alpha}^{k}:=\frac{1}{s_{\alpha}^{k}}\left[\sum_{\alpha_{2}\in V\cup F}\mathbf{z}_{\alpha_{2}}^{k}\right]_{\alpha}$.
Using a technique in \cite[(8)]{Gaffke_Mathar}, the duality gap (i.e.,
the difference between the objective values of \eqref{eq:2nd-primal}
and \eqref{eq:dual-1}) satisfies
\begin{eqnarray}
 &  & \begin{array}{c}
\underset{\alpha\in V\cup E\cup\{r\}}{\overset{\phantom{\alpha\in V\cup E\cup\{r\}}}{\sum}}\frac{s_{\alpha}^{k}}{2}\|x^{*}-\bar{m}\|^{2}+\underset{i\in V}{\sum}\mathbf{f}_{i}(\mathbf{x}^{*})+\underset{\beta\in F}{\sum}\delta_{H_{\beta}}(\mathbf{x}^{*})-\frac{|V|}{2}\|\bar{m}\|^{2}\end{array}\nonumber \\
 &  & \begin{array}{c}
+\underset{i\in V}{\sum}\mathbf{f}_{i}^{*}(\mathbf{z}_{i}^{k})+\underset{\beta\in F}{\sum}\delta_{H_{\beta}^{\perp}}(\mathbf{z}_{\beta}^{k})+\underset{\alpha\in V\cup E\cup\{r\}}{\overset{\phantom{\alpha\in V\cup E\cup\{r\}}}{\sum}}\frac{s_{\alpha}^{k}}{2}\left\Vert \bar{m}-v_{\alpha}^{k}\right\Vert ^{2}\end{array}\nonumber \\
 & \overset{\eqref{eq:Fenchel-ineq},\eqref{eq:condn-on-s}}{\geq} & \begin{array}{c}
\left\langle \mathbf{x}^{*},\underset{\alpha\in V\cup E\cup\{r\}}{\overset{\phantom{\alpha\in V\cup E\cup\{r\}}}{\sum}}\mathbf{z}_{\alpha}^{k}\right\rangle +\underset{\alpha\in V\cup E\cup\{r\}}{\overset{\phantom{\alpha\in V\cup E\cup\{r\}}}{\sum}}s_{\alpha}^{k}\left(\frac{1}{2}\|x^{*}-\bar{m}\|^{2}+\frac{1}{2}\left\Vert \bar{m}-v_{\alpha}^{k}\right\Vert ^{2}-\frac{1}{2}\|\bar{m}\|^{2}\right)\end{array}\nonumber \\
 & = & \begin{array}{c}
\underset{\alpha\in V\cup E\cup\{r\}}{\overset{\phantom{\alpha\in V\cup E\cup\{r\}}}{\sum}}s_{\alpha}^{k}\left(\left\langle x^{*},v_{\alpha}^{k}\right\rangle +\frac{1}{2}\|x^{*}\|^{2}-\left\langle x^{*},\bar{m}\right\rangle +\frac{1}{2}\left\Vert \bar{m}-v_{\alpha}^{k}\right\Vert ^{2}\right)\end{array}\nonumber \\
 & = & \begin{array}{c}
\underset{\alpha\in V\cup E\cup\{r\}}{\overset{\phantom{\alpha\in V\cup E\cup\{r\}}}{\sum}}\frac{s_{\alpha}^{k}}{2}\left\Vert x^{*}-\left(\bar{m}-v_{\alpha}^{k}\right)\right\Vert ^{2}\overset{\eqref{eq:x-alpha-relation}}{=}\underset{\alpha\in V\cup E\cup\{r\}}{\overset{\phantom{\alpha\in V\cup E\cup\{r\}}}{\sum}}\frac{s_{\alpha}^{k}}{2}\left\Vert x^{*}-x_{\alpha}^{k}\right\Vert ^{2}.\end{array}\label{eq:big-formula}
\end{eqnarray}
The formula in the first two lines in \eqref{eq:big-formula} is nonincreasing
due to Theorem \ref{thm:recurrence}. Suppose that when $k=0$, the
value in the first two lines in the formula in \eqref{eq:big-formula}
is $C_{0}$. Since $s_{i}^{k}>\bar{\epsilon}$ for all $i\in V$,
we have $\|x^{*}-x_{i}^{k}\|\overset{\eqref{eq:big-formula}}{<}\sqrt{C_{0}/\bar{\epsilon}}$
for all $i\in V$ and $k$. From operation $A$, we can see that either
$s_{e}^{k}=0$ or $s_{e}^{k}>\bar{\epsilon}/|V|$, so $\|x^{*}-x_{e}^{k}\|<\sqrt{C_{0}|V|/\bar{\epsilon}}$
for all $e\in E$ and $k$. So $\{x_{\alpha}^{k}\}_{k}$ is bounded
for all $\alpha\in V\cup E$. A similar analysis shows the same for
$\{x_{r}^{k}\}_{k}$. (If we only use operations $A$, $B$ and $C$,
$s_{r}^{k}$ and $y_{r}^{k}$ remain zero throughout.)
\end{proof}
\begin{thm}
\label{thm:convergence}(Convergence to dual objective value) Suppose
there is some number $\bar{\epsilon}>0$ such that $s_{i}^{k}>\bar{\epsilon}$
for all $k\geq0$ and $i\in V$. Assume that the iterates $\{[\mathbf{z}_{i}^{k}]_{i}\}_{i\in V}$
of Algorithm \ref{alg:undir-alg} are bounded. Suppose that there
is a number $K$ such that in $K$ consecutive iterations,
\begin{enumerate}
\item Operations $A$ and $C$ are carried out for all nodes $i\in V$ separately
at least once, and
\item Operation $B$ is carried out for all edges $(i,j)\in E$ separately
at least once. 
\end{enumerate}
Then $\{\Val(T^{k})\}_{k}$ is nonincreasing, and its limit is the
dual objective value of \eqref{eq:dual-2}. 
\end{thm}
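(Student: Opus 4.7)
The plan is to settle monotonicity first, then pass to a subsequence limit $T^\infty$, argue that every algorithmic operation must act as a $\Val$-preserving map at $T^\infty$, and read off the KKT conditions of \eqref{eq:dual-2} from this fixed-point property. Monotonicity is immediate from Theorem \ref{thm:recurrence}: operation $A$ is a composition of operations $D$, each of which preserves $\Val$; operation $B$ is a single $E$, which does not increase $\Val$; and operation $C$ is the block minimization in line 12. Strong duality between \eqref{eq:2nd-primal} and \eqref{eq:dual-1} bounds $\Val(T^k)$ below by the common infimum $V^*$ of \eqref{eq:dual-2} (this value is independent of the valid choice of $s$, since the primal optimum of \eqref{eq:2nd-primal} is). So $V^\infty := \lim_k \Val(T^k)$ exists and satisfies $V^\infty \geq V^*$; only the reverse inequality requires work.

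For $V^\infty \leq V^*$, I would pass to a subsequence $\{k_\ell\}$ along which $s_\alpha^{k_\ell}$, $x_\alpha^{k_\ell}$, and $[\mathbf{z}_i^{k_\ell}]_i$ all converge, using $\sum_\alpha s_\alpha = |V|$, Theorem \ref{thm:bddness}, and the hypothesis respectively. Telescoping $\Val(T^{k_\ell + K}) - \Val(T^{k_\ell}) \to 0$ shows that the $\Val$-decrease contributed by the operation at every slot $p \in \{0,\dots,K-1\}$ of the window $[k_\ell, k_\ell + K]$ tends to zero; combining $K$-fairness with a finite pigeonhole over the slot at which each particular operation fires (and thinning further so that this slot is fixed across $\ell$), I obtain a common cluster point $T^\infty$ at which every operation $A_i$, $B_{(i,j)}$, $C_j$ acts as a $\Val$-preserving map.

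At $T^\infty$ this yields two structural properties. Operation $C_j$ is a no-op, so by the Fenchel relation in lines $12'$--$13'$ one has $x_j^\infty \in \partial f_j^*([\mathbf{z}_j^\infty]_j)$ for every $j \in V$. Operation $B_{(i,j)}$, being one $E$, achieves equality in \eqref{eq:use-identity}; strict convexity of $\|\cdot\|^2$ together with $s_j^\infty \geq \bar\epsilon > 0$ then forces $x_{(i,j)}^\infty = x_j^\infty$ at the firing instant, and since operation $A_i$ merely splits a node quantity into pieces sharing a common $x$-value, strong connectivity propagates these equalities to a common value $x^\infty$ across every coordinate $\alpha \in V\cup E\cup\{r\}$. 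These are precisely the KKT conditions of \eqref{eq:dual-2}: stationarity in the $\mathbf{z}_i$-block reads $x_i \in \partial f_i^*([\mathbf{z}_i]_i)$, and stationarity in each $\mathbf{z}_\beta$-block with $\beta = \{\alpha_1, \alpha_2\} \in F$, after eliminating the sparsity constraint of Proposition \ref{prop:sparsity}(2), reduces to $x_{\alpha_1} = x_{\alpha_2}$. Thus $\mathbf{z}^\infty$ minimizes \eqref{eq:dual-2}, and $\Val(T^\infty) = V^*$ by \eqref{eq:x-alpha-relation}, so $V^\infty = V^*$.

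The main obstacle I anticipate is the bookkeeping of the ``zero-decrease per operation'' step, because operations $A$ and $B$ internally manipulate the auxiliary index $r$ of Remark \ref{rem:On-index-r} via their $D$, $E$ decompositions, and the equality case of \eqref{eq:use-identity} has to be applied one $E$ at a time through the pigeonhole. The uniform lower bound $s_i^k > \bar\epsilon$ is essential throughout, both to invoke Theorem \ref{thm:bddness} and to make the strict convexity in \eqref{eq:use-identity} quantitative enough to survive passage to the limit.
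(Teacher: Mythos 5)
Your proposal is correct and follows essentially the same route as the paper's proof: monotonicity from Theorem \ref{thm:recurrence} and line 12, extraction of a subsequence along which the whole $K$-window of states and operations converges, the ``zero decrease per operation'' argument forcing equality in \eqref{eq:use-identity} and a fixed point of operation $C$, propagation of the common $x$-value by strong connectivity, and block-wise stationarity of the dual function to conclude global optimality by convexity. The only cosmetic difference is that the paper tracks the full limiting window $(\tilde{T}_0,\dots,\tilde{T}_K)$ and normalizes to a canonical $\hat{T}$ with $\hat{s}_i=1$ before checking stationarity, whereas you phrase the same conclusion as the KKT conditions at a single cluster point.
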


\begin{proof}
We consider the tuple $T^{k}:=(\{s_{\alpha}^{k}\}_{\alpha\in V\cup E},\{x_{\alpha}^{k}\}_{\alpha\in V\cup E},\{[\mathbf{z}_{i}^{k}]_{i}\}_{i\in V})$.
Since all these quantities are bounded by Theorem \ref{thm:bddness}
and the assumptions, there is a subsequence $\{T^{k_{i}}\}_{i}$ such
that $\lim_{i\to\infty}T^{k_{i}}$ exists. Taking subsequences if
necessary, we can assume that $\lim_{i\to\infty}(T^{k_{i}},T^{k_{i}+1})$
exists, and that the operation (either $A$, $B$ or $C$) to get
$T^{k_{i}+1}$ from $T^{k_{i}}$ are all the same. Applying this procedure
repeatedly shows that we can assume that 
\[
\lim_{i\to\infty}(T^{k_{i}},T^{k_{i}+1},\dots,T^{k_{i}+K})
\]
exists. For each $j\in\{1,\dots,K\}$, the operations to get $T^{k_{i}+j}$
from $T^{k_{i}+j-1}$ are indepedent on $i$. For $j\in\{0,\dots,K\}$,
let the limits $\lim_{i\to\infty}T^{k_{i}+j}$ be 
\[
\tilde{T}_{j}:=(\{\tilde{s}_{\alpha}^{j}\}_{\alpha\in V\cup E},\{\tilde{x}_{\alpha}^{j}\}_{\alpha\in V\cup E},\{[\tilde{\mathbf{z}}_{i}^{j}]_{i}\}_{i\in V}).
\]
From the continuity of the operations $A$, $B$ and $C$, the operations
to get $\tilde{T}_{j}$ from $\tilde{T}_{j-1}$ must be the same as
that of getting $T^{k_{i}+j}$ from $T^{k_{i}+j-1}$. Since $\{\Val(T^{k})\}_{k}$
is a nonincreasing sequence, we conclude that 
\begin{equation}
\Val(\tilde{T}_{0})=\Val(\tilde{T}_{1})=\cdots=\Val(\tilde{T}_{K}).\label{eq:values-equal}
\end{equation}
Since $s_{i}^{k}>\bar{\epsilon}$ for all $k\geq0$ and $i\in V$,
we have $\tilde{s}_{i}^{j}>\bar{\epsilon}$ for all $i\in V$ and
$j\in\{0,\dots,T\}$. 
\begin{claim}
\label{claim:the-claim}There is some $\tilde{x}^{*}\in\mathbb{R}^{m}$
such that $\tilde{x}_{\alpha}^{j}=\tilde{x}^{*}$ for all $\alpha\in V\cup E$
and $j\in\{0,\dots,K\}$. 
\end{claim}

We now prove Claim \ref{claim:the-claim}. Recall that the operations
$A$, $B$ and $C$ are continuous. Seeking a contradiction, suppose
$\tilde{x}_{\alpha}^{j-1}\neq\tilde{x}_{\alpha}^{j}$ for some $j\in\{1,\dots,K\}$.
There are three cases. 

\textbf{Case 1:} Operation $C$ was used to get $\tilde{T}_{j}$ from
$\tilde{T}_{j-1}$.

Recall that operation $C$ gives $\tilde{x}_{\alpha}^{j-1}=\tilde{x}_{\alpha}^{j}$
when $\alpha\in E$, so $\alpha$ has to be in $V$. Hence $\tilde{s}_{\alpha}^{j-1}>\bar{\epsilon}$.
We recall that 
\[
\begin{array}{c}
\tilde{x}_{\alpha}^{j}\overset{\scriptsize{\text{Line 12}^{\prime}}}{=}\underset{x}{\arg\min}\frac{\tilde{s}_{\alpha}^{j-1}}{2}\left\Vert \tilde{x}_{\alpha}^{j-1}+\frac{1}{\tilde{s}_{\alpha}^{j-1}}z_{\alpha}^{j-1}-x\right\Vert ^{2}+f_{\alpha}(x)\text{ for all }\alpha\in V.\end{array}
\]
Since $\tilde{x}_{\alpha}^{j}\neq\tilde{x}_{\alpha}^{j-1}$, we have
$\tilde{T}_{j}\leq\tilde{T}_{j-1}-\frac{\tilde{s}_{\alpha}^{j-1}}{2}\|\tilde{x}_{\alpha}^{j}-\tilde{x}_{\alpha}^{j-1}\|^{2}<\tilde{T}_{j-1}$,
which contradicts \eqref{eq:values-equal}. 

\textbf{Case 2:} Operation $B$ was used to get $\tilde{T}_{j}$ from
$\tilde{T}_{j-1}$.

By looking further at operation $B$, we deduce that $\alpha=j'$
for some $j'\in V$, and operation $B$ was performed on $(i',j')\in E$.
Since Operation $B$ makes use of Operations $D$ and $E$, and so
we look at how Operation $E$ would lead to a contradiction if $\tilde{x}_{\alpha}^{j-1}\neq\tilde{x}_{\alpha}^{j}$.
The inequality in \eqref{eq:use-identity} is strict if $x_{\bar{\alpha}_{2}}^{\circ}\neq x_{r}^{\circ}$.
Translating this observation to the case of Operation $B$ shows that
$\tilde{T}_{j}\leq\tilde{T}_{j-1}-\frac{s_{\alpha}^{j-1}}{2}\|\tilde{x}_{\alpha}^{j}-\tilde{x}_{\alpha}^{j-1}\|^{2}$,
which again contradicts \eqref{eq:values-equal}. The analysis here
also shows that $\tilde{x}_{j'}^{j}=\tilde{x}_{(i',j')}^{j}$. 

\textbf{Case 3:} Operation $A$ was used to get $\tilde{T}_{j}$ from
$\tilde{T}_{j-1}$.

Suppose the node chosen in operation $A$ was $i'$. The techniques
used in this case is similar to that in case 2. So we just summarize
the conclusions, which are that $\tilde{x}_{i'}^{j}=\tilde{x}_{i'}^{j-1}$,
$\tilde{x}_{i'}^{j}=\tilde{x}_{(i',j')}^{j}$ and $\tilde{x}_{(i',j')}^{j-1}=\tilde{x}_{(i',j')}^{j}$
for all $i'\in V$ and all out-neighbors $j'$ of $i'$. 

Since the graph $G$ was assumed to be strongly connected, the analysis
in all three cases shows the conclusion of Claim \ref{claim:the-claim}.
$\hfill\triangle$

We now show that for all $i\in V$, $[\tilde{\mathbf{z}}_{i}^{j}]_{i}$
equal to some $\tilde{z}_{i}^{*}$ such that $\tilde{z}_{i}^{*}\in\partial f_{i}(x^{*})$
for all $j\in\{0,\dots,K\}$. Seeking a contradiction, suppose $[\tilde{\mathbf{z}}_{i}^{j-1}]_{i}\neq[\tilde{\mathbf{z}}_{i}^{j}]_{i}$.
The only possibility is that operation $C$ was used to get from $\tilde{T}_{j-1}$
to $\tilde{T}_{j}$. Since operation $C$ is continuous, we have 
\begin{equation}
\begin{array}{c}
[\tilde{\mathbf{z}}_{i}^{j}]_{i}\overset{\scriptsize{\text{Line 12}}}{=}\underset{z}{\arg\min}\frac{\tilde{s}_{i}^{j-1}}{2}\left\Vert \tilde{x}^{*}+\frac{1}{\tilde{s}_{i}^{j-1}}([\tilde{\mathbf{z}}_{i}^{j}]_{i}-z)\right\Vert ^{2}+f_{i}^{*}(z),\end{array}\label{eq:z-prox-form}
\end{equation}
If $[\tilde{\mathbf{z}}_{i}^{j-1}]_{i}\neq[\tilde{\mathbf{z}}_{i}^{j}]_{i}$,
then we have $\tilde{T}_{j}\leq\tilde{T}_{j-1}-\frac{\tilde{s}_{i}^{j-1}}{2}\|\frac{1}{\tilde{s}_{i}^{j-1}}([\tilde{\mathbf{z}}_{i}^{j-1}]_{i}-[\tilde{\mathbf{z}}_{i}^{j}]_{i})\|^{2}<\tilde{T}_{j-1}$,
which contradicts \eqref{eq:values-equal}. The formula \eqref{eq:z-prox-form}
also shows that $\tilde{z}_{i}=[\tilde{\mathbf{z}}_{i}^{j}]_{i}\in\partial f_{i}(\tilde{x}^{*})$. 

By making use of Operations $D$ and $E$, all the $\tilde{T}_{j}$
can be transformed to some $\hat{T}$ where $\hat{s}_{i}=1$ for all
$i\in V$, and $\hat{s}_{e}=0$ for all $e\in E$. From the discussion
in Subsection \ref{subsec:D-and-E}, we can find some $\{\hat{\mathbf{z}}_{\beta}\}_{\beta\in V\cup F}$
such that $\Val(\hat{T})$ is also equal to $F(\{\hat{\mathbf{z}}_{\alpha}\}_{\alpha\in V\cup F})$,
where 
\[
\begin{array}{c}
F(\{\mathbf{z}_{\alpha}\}_{\alpha\in V\cup F}):=\underset{i\in V}{\sum}\mathbf{f}_{i}^{*}(\mathbf{z}_{i})+\underset{\beta\in F}{\sum}\delta_{H_{\beta}^{\perp}}(\mathbf{z}_{\beta})+\underset{\alpha\in V}{\sum}\frac{1}{2}\left\Vert \bar{m}-\left[\underset{\alpha_{2}\in V\cup F}{\sum}\mathbf{z}_{\alpha_{2}}\right]_{\alpha}\right\Vert ^{2}.\end{array}
\]
 We see that $F(\cdot)$ is the sum of separable terms (the first
two sums) and a smooth term (the last sum). In view of $\tilde{z}_{i}^{*}\in\partial f_{i}(\tilde{x}^{*})$
for all $i\in V$, the partial subdifferential of $F(\cdot)$ with
respect to $\mathbf{z}_{i}$ is zero at $\{\hat{\mathbf{z}}_{\beta}\}_{\beta\in V\cup F}$.
Since all $\hat{x}_{\alpha}$ are equal for all $\alpha\in V\cup E$,
the partial subdifferential of $F(\cdot)$ with respect to $\mathbf{z}_{\gamma}$
is also zero at $\{\hat{\mathbf{z}}_{\beta}\}_{\beta\in V\cup F}$
for all $\gamma\in F$. By some basic theory on block coordinate minimization
for convex problems, we have $\{\hat{\mathbf{z}}_{\beta}\}_{\beta\in V\cup F}$
being a minimizer of $F(\cdot)$, which shows that $\tilde{T}_{j}$
are at their minimum values for all $j\in\{0,\dots,K\}$. 
\end{proof}
The analysis in \cite{Pang_Dist_Dyk} (which traces back to \cite{Gaffke_Mathar}
and earlier) implies that there is strong duality between the problems
\eqref{eq:2nd-primal} and \eqref{eq:dual-1}, so the $\tilde{x}^{*}$
in the proof of Theorem \ref{thm:convergence} is $x^{*}$, the minimizer
of \eqref{eq:dist_opt_pblm}, and the quantity $\sum_{\alpha\in V\cup E\cup\{r\}}\frac{s_{\alpha}}{2}\left\Vert x^{*}-x_{\alpha}\right\Vert ^{2}$
in \eqref{eq:big-formula} converges to zero. Hence by Theorems \ref{thm:bddness}
and \ref{thm:convergence}, $\lim_{k\to\infty}x_{i}^{k}$ exists and
equals $x^{*}$ for all $i\in V$. 

\section{Numerical experiments}

We conduct some simple experiments by looking at the case where $m=6$
and the graph has 6 nodes and contains two cycles, $1\to2\to3\to5\to1$
and $2\to4\to6\to2$. Let $\mathbf{e}$ be \texttt{ones(m,1)}. First,
we find $\{v_{i}\}_{i\in V}$ and $\bar{x}$ such that $\sum_{i\in V}v_{i}+|V|(\mathbf{e}-\bar{x})=0$.
We then find closed convex functions $f_{i}(\cdot)$ such that $v_{i}\in\partial f_{i}(\mathbf{e})$.
It is clear from the KKT conditions that $\mathbf{e}$ is the primal
optimum solution to \eqref{eq:dist_opt_pblm} if $\bar{x}_{i}=\bar{x}$
for all $i\in V$. 

We define $f_{i}(\cdot)$ as functions of the following type:
\begin{itemize}
\item [(F-S)]$f_{i}(x):=\frac{1}{2}x^{T}A_{i}x+b_{i}^{T}x+c_{i}$, where
$A_{i}$ is of the form $vv^{T}+rI$, where $v$ is generated by \texttt{rand(m,1)},
$r$ is generated by \texttt{rand(1)}. $b_{i}$ is chosen to be such
that $v_{i}=\nabla f(\mathbf{e})$, and $c_{i}=0$. 
\item [(F-NS)]$f_{i}(x):=\max\{f_{i,1}(x),f_{i,2}(x)\}$, where $f_{i,j}(x):=\frac{1}{2}x^{T}A_{i}x+b_{i,j}^{T}x+c_{i,j}$
for $j\in\{1,2\}$, $A_{i}$ is of the form $vv^{T}+rI$, where $v$
is generated by \texttt{rand(m,1)}, $r$ is generated by \texttt{rand(1)},
$b_{i,1}$ and $b_{i,2}$ are chosen such that $v_{i}=\frac{1}{2}[\nabla f_{i,1}(\mathbf{e})+\nabla f_{i,2}(\mathbf{e})]$
but $v_{i}$ is neither $\nabla f_{i,1}(\mathbf{e})$ nor $\nabla f_{i,2}(\mathbf{e})$,
and $c_{i,1}$ and $c_{i,2}$ are chosen such that $f_{i,1}(\mathbf{e})=f_{i,2}(\mathbf{e})$. 
\end{itemize}
Note the algorithms in \cite{Notarstefano_gang_Newton_2017,Scutari_ASY_SONATA_2018}
do not handle nonsmooth functions. Also, our algorithm does not require
one to choose parameters to be small enough in order to achieve convergence.

\begin{figure}
\includegraphics[scale=0.25]{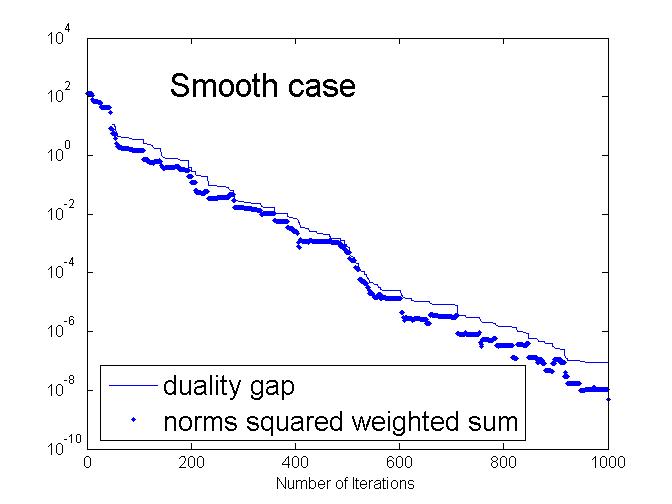}\includegraphics[scale=0.25]{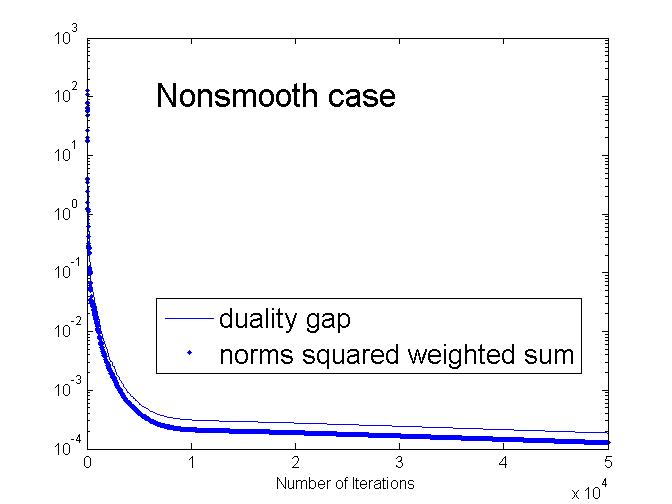}

\caption{\label{fig:fig}Plots of the formulas in \eqref{eq:big-formula}}

\end{figure}
We conduct two experiments, one when all functions are of the form
(F-S), and another when the functions are all of the form (F-NS).
The first and last formulas of \eqref{eq:big-formula} indicate how
fast the primal iterates $\{x_{\alpha}\}_{\alpha\in V\cup E}$ are
converging to the optimal solution $x^{*}$, and we call these values
the ``duality gap'' and the ``norms squared weighted sum''. Figure
\ref{fig:fig} shows a plot of the results obtained by a random experiment
where we perform 1000 iterations of the smooth case and 50000 iterations
of the nonsmooth case. The results observed are quite similar to that
in \cite{Pang_rate_D_Dyk}, where the edges are undirected. Specifically,
if all the functions $f_{i}(\cdot)$ are of the form (F-S), then we
observe linear convergence (though we have not proved this yet). If
all functions $f_{i}(\cdot)$ are of the form (F-NS), then we observe
sublinear convergence. Rather often, this sublinear convergence is
seen to be of order $O(1/k)$. 

\section{Conclusion}

To conclude, we make a few observations. The insight that the algorithm
in \cite{Bof_Carli_Schenato_2017} can be written as a dual ascent
optimization problem shows that other ideas in algorithm design that
were already laid out in the related papers \cite{Pang_Dist_Dyk,Pang_sub_Dyk,Pang_rate_D_Dyk,Pang_level_sets_Dyk}
on a distributed Dykstra's algorithm (for undirected edges), as well
as \cite{Pang_DBAP}, can be incorporated into the algorithm in this
paper. It is also straightforward to design an improved algorithm
for the case when some of the edges in the graph are undirected while
others are directed using Operations $D$ and $E$. We defer the proof
of linear convergence of the case when the functions $f_{i}(\cdot)$
in \eqref{eq:dist_opt_pblm} are smooth to a future paper. It would
be of interest to incorporate the algorithmic and theoretical properties
known for the case of undirected graphs to the case of directed graphs
with unreliable communications. 

\bibliographystyle{amsalpha}
\bibliography{../refs}

\end{document}